\newtheorem{lemma}{Lemma}[section]
\newtheorem{theorem}[lemma]{Theorem}
\newtheorem{cor}[lemma]{Corollary}
\theoremstyle{remark}
\def\SO{\mathop{\rm SO}\nolimits}
\def\SL{\mathop{\rm SL}\nolimits}
\def\Sp{\mathop{\rm Sp}\nolimits}
\def\PSL{\mathop{\rm PSL}\nolimits}          
\def\PSp{\mathop{\rm PSp}\nolimits}           
\def\SU{\mathop{\rm SU}\nolimits}          
\def\PSU{\mathop{\rm PSU}\nolimits}          
\def\Sp{\mathop{\rm Sp}\nolimits}
\def\Sym{\mathop{\rm Sym}\nolimits}
\newcommand{\tr}[1]{{\rm Tr}(#1)}
\def\Alt{\mathop{\rm Alt}\nolimits}
\def\id{\mathrm{id}}
\def\blockdiag{\mathop{\rm blockdiag}\nolimits}
\def\diag{\mathop{\rm diag}\nolimits}
\newcommand{\N}{\mathbb{N}}    
\newcommand{\Z}{\mathbb{Z}}    
\newcommand{\F}{\mathbb{F}}    
\title[The simple classical groups of dimension less than $6$ which are \ldots ]{The
simple classical groups of dimension less than $6$ which are $(2,3)$-generated}
\author{M.A. Pellegrini}
\address{Dipartimento di Matematica e Fisica, Universit\`a Cattolica del Sacro Cuore, Via Musei 41,
I-25121 Brescia, Italy}
\email{marcoantonio.pellegrini@unicatt.it}
\author{M.C. Tamburini Bellani}
\address{Dipartimento di Matematica e Fisica, Universit\`a Cattolica del Sacro Cuore, Via Musei 41,
I-25121 Brescia, Italy}
\email{mariaclara.tamburini@unicatt.it}
\keywords{Generation; Classical groups}
\subjclass[2010]{20G40, 20F05}
\begin{document}

\begin{abstract}
In this paper we determine the classical simple groups of dimension $n=3,5$ which
are $(2,3)$-generated (the cases  $n=2,4$ are known). If $n=3$, they are
$\PSL_3(q)$, $q\ne 4$, and $\PSU_3(q^2)$, $q^2\ne 9,25$.
If $n=5$ they are $\PSL_5(q)$, for all $q$, 
and $\PSU_5(q^2)$,  $q^2\ge 9$.  Also, the soluble group $\PSU_3(4)$ is not $(2,3)$-generated.
We give explicit $(2,3)$-generators of the linear preimages, 
in the special linear groups, of the $(2,3)$-generated simple groups.
\end{abstract}

\maketitle
\section{Introduction}
A $(2,3)$-generated group is a group that can be generated by two elements of respective orders $2$ and $3$.
Alternatively, by a result of Fricke and Klein \cite{FK},
it can be defined as an epimorphic image, of order $>3$,
of the group $\PSL_2(\Z)$. 

Many finite simple and quasi-simple groups
have been investigated with respect to this property.
Apart from the infinite families $\PSp_4(2^ m)$  and $\PSp_4(3^ m)$,
the other finite classical simple groups are $(2, 3)$-generated, up to a finite number
of exceptions,  by the probabilistic results of 
Liebeck and Shalev \cite{LS}.
These exceptions, for groups of Lie type, occur in small dimensions over small fields. 
For a recent survey see \cite{Vsur}. Our notation for classical groups
is taken in accordance with  \cite{Car}: consequently we denote by $\SU_n(q^2)$
the group  appearing  also as $\SU_n(q)$ in the literature, e.g. in \cite{BHR}.

The groups $\PSL_2(q)$ were dealt with by Macbeath in \cite{M}: the only exception is
$\PSL_2(9)$. The groups $\SL_4(q)$ and $\SU_4(q^2)$ were studied in \cite{PTV}.
In dimension $4$, apart from  the above infinite series of symplectic groups, 
the only simple groups which are not $(2,3)$-generated are  
$\PSL_4(2)$, $\PSU_4(4)$, $\PSU_4(9)$. 
The groups $\SL_n(q)$, for $n=5,6,7$ were studied in \cite{DV}, \cite{T5}, \cite{T6},\
\cite{T7}: all these groups are $(2,3)$-generated.

The aim of this paper is 
to fill the gaps for classical groups of  dimension $n\le 5$,
treating in a uniform way the special linear and the unitary groups.
This also gives a partial answer to Problem n. 18.98 of \cite{KM}.

The soluble group $\PSU_3(4)$ and the simple groups
$\PSL_3(4)$ (Garbe \cite{Ga}, Cohen \cite{Co}) and $\PSU_3(9)$ (Wagner \cite{W}) are not $(2,3)$-generated. 
The same applies to $\PSU_3(25)$ by Theorem \ref{negative results} of this paper. In
Theorems \ref{positive SL3} and \ref{positive results3} we show that:
\begin{itemize}
 \item[\rm{(i)}] $\SL_3(q)$ and $\PSL_3(q)$ are $(2,3)$-generated for all  $q\ne 4$; 
\item[\rm{(ii)}] $\SU_3(q^2)$ and $\PSU_3(q^2)$ are $(2,3)$-generated for all  $q^2\ne
4, 9,25$. 
\end{itemize}
The group $\PSU_5(4)$ is not $(2,3)$-generated (Vsemirnov \cite{V}). 
On the other hand, in Theorems \ref{positive SL5} and \ref{positive results5}
we show that: 
\begin{itemize}
\item[\rm{(i)}] $\SL_5(q)$ and $\PSL_5(q)$ are $(2,3)$-generated for all  $q$; 
\item[\rm{(ii)}] $\SU_5(q^2)$ and $\PSU_5(q^2)$ are $(2,3)$-generated for all  $q^2\ge 9$. 
\end{itemize}
The proofs of our results are based on the classification of maximal subgroups of the 
finite classical groups (\cite{BHR}) which, in the Aschbacher notation,
fall into classes ${\mathcal C}_1$, \dots , $\mathcal{C}_8$ and $\mathcal{S}$.

\section{Notation and preliminary results}

Let $\F$ be an algebraically closed field of characteristic $p\ge 0$.
When $p>0$, we set $q=p^m,\ m\ge 1$.
We denote by $\sigma$ the automorphism of $\SL_n(q^2)$ defined as:
\begin{equation}\label{sigma}
\left(\alpha_{i,j}\right)\mapsto \left(\alpha_{i,j}^q\right).
\end{equation}
For a fixed $n$, we call $x,y$ two elements of $\SL_n(\F)$  whose
respective orders are $2$ and $3$.
We denote by $z$ the product $xy$, and by  $H$ the subgroup of $\SL_n(\F)$ generated by
$x,y$, i.e.,
$H=\left\langle x,y\right\rangle$.

We need $H$ to be absolutely irreducible. 
This requirement restricts the possible canonical forms of $x,y$ and their shapes, up to conjugation.

We fix certain roots of unity in $\F^\ast$, according to the notation in  Table A.
\smallskip

\begin{center}
Table A\\\medskip
\begin{tabular}{cc|cc|cc}
Root & Order& Root& Order&Root &Order
\\ \hline\\[-10pt]
$\omega$& $\frac{3}{(3,p)}$& $i$ & $\frac{4}{(4,p^2)}$&$\eta$ & $\frac{5}{(5,p)}$
\end{tabular}
\end{center}
\smallskip

For later use, we give some lower bounds of the Euler-Phi function $\varphi: \N\to \N$.

\begin{lemma}\label{Phi 3/2} Let $n$ be a natural number. Then 
$\varphi(n)>n^{2/3}$ if, and only if,
$$n\not \in\{1,2,3,4,6,8,10,12,18,24,30,42\}.$$ 
Furthermore, $\varphi(n)=n^{2/3}$ if, and only if, $n=1,8$.
\end{lemma}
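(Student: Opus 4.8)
The plan is to study the multiplicative function $g(n)=\varphi(n)/n^{2/3}$ and to recast both assertions as statements about when $g(n)\le 1$. Since $\varphi$ is multiplicative and $n\mapsto n^{2/3}$ is completely multiplicative, $g$ is multiplicative, and on a prime power it equals $g(p^a)=(p-1)\,p^{a/3-1}$. First I would record two elementary facts about this local factor: it is strictly increasing in $a$, since $g(p^{a+1})/g(p^a)=p^{1/3}>1$; and, by comparing $(p-1)^3$ with $p^2$, one gets $g(p)>1$ for every prime $p\ge 5$ (already $64>25$ at $p=5$). Tracking the small cases $p^a\in\{2,4,8\}$ and $3^a\in\{3,9\}$, it follows that the only prime powers with $g(p^a)\le 1$ are $2,4,8$ and $3$, with the value $1$ attained exactly at $p^a=8$ (indeed $g(8)=2^0=1$) and the other three giving $g(p^a)<1$. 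I would also note at the outset the clean reformulation $\varphi(n)>n^{2/3}\iff\varphi(n)^3>n^2$, which turns every borderline verification into a comparison of integers.

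Next I would isolate a uniform lower bound. For each prime the local factor is at least its minimum over all exponents: $g(2^a)\ge g(2)=2^{-2/3}$, $g(3^b)\ge g(3)=2\cdot 3^{-2/3}$, and $g(p^a)\ge 1$ for $p\ge 5$. Multiplying, $g(n)\ge g(2)g(3)=g(6)$ for every $n$, a constant exceeding $0.6$ (equivalently $\varphi(6)^3=8<36=6^2$ fails, so this is a genuine minimum). Consequently any prime $p\ge 11$ dividing $n$ would contribute a factor $g(p^a)\ge g(11)=10\cdot 11^{-2/3}$, and $g(6)\,g(11)>1$ (integer check: $\varphi(66)^3=8000>4356=66^2$), forcing $g(n)>1$. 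The same comparison rules out $25\mid n$, $49\mid n$, and $5\cdot 7\mid n$. This confines every $n$ with $g(n)\le 1$ to the shape $n=2^a3^b5^c7^d$ with $c,d\le 1$ and $5,7$ not both present.

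At this point only a short, explicit case check remains, organized by whether $7$, $5$, or neither divides $n$. In each branch the condition $g(n)\le 1$ reduces to an inequality on the residual factor $g(2^a)g(3^b)$, namely $g(2^a)g(3^b)\le 1/g(7)$, $\le 1/g(5)$, or $\le 1$ respectively; and since $g(2^a)g(3^b)$ takes only finitely many values below $1$ (governed by the increasing sequences of the preceding paragraph), I would simply tabulate them. This yields the three exceptional families $\{42\}$, $\{10,30\}$, and $\{1,2,3,4,6,8,12,18,24\}$, whose union is exactly the asserted set, with $g(n)<1$ strictly except at $n=1$ and $n=8$, where $g(n)=1$.

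I expect the only delicate point to be the borderline cases, where $g(n)$ sits extremely close to $1$, and where a loose numerical estimate could give the wrong sign. These, however, are handled decisively by the integer criterion $\varphi(n)^3$ versus $n^2$: for instance $14$ is \emph{not} exceptional because $\varphi(14)^3=216>196=14^2$, whereas $42$ \emph{is} because $\varphi(42)^3=1728<1764=42^2$, and likewise $20$ (with $512>400$) against $30$ (with $512<900$). Everything else in the argument is forced by the monotonicity of $g(p^a)$ in $a$ and the single uniform bound $g(n)\ge g(6)$, so no genuinely hard estimate is needed once the problem is phrased multiplicatively.
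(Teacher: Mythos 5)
Your proof is correct and follows essentially the same route as the paper's: both exploit multiplicativity to analyze prime powers first, observe that only the primes $2$ and $3$ can push $\varphi(n)$ below $n^{2/3}$, reduce to the $\{2,3\}$-part of $n$ times a cofactor forced into $\{1,5,7\}$, and finish with a finite check. Your packaging via the multiplicative function $g(n)=\varphi(n)/n^{2/3}$, its monotone local factors, and the exact global minimum $g(6)$ is a tidier formulation of the paper's constants $\tfrac{3}{5}$ and $\tfrac{5}{3}$, but the decomposition and the key numerical verifications are the same.
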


\begin{proof}
Recall that if $m,n$ are two coprime integers, then $\varphi(mn)=\varphi(m)\varphi(n)$.
Let us first consider the case $n=s^a$, with $s$ a prime. Then 
$$\varphi(s^a)= s^{a-1}(s-1)> (s^a)^{2/3}\;\Leftrightarrow\; s^{a-3}(s-1)^3>1.$$
The last inequality holds if, and only if, one the following cases occurs:

(i)\, $a=1$,   $s\neq 2,3$; \quad (ii)\, $a=2,3$,  $s\neq 2$; \quad 
(iii)\, $a\geq 4$.

In particular this implies that, if $(n,6)=1$, then $\varphi(n)> n^{2/3}$. 
So, we are left to deal with the cases when $(n,6)\neq 1$. 
Suppose first $n=2^a 3^b$ with $a,b\ge 1$. Then  
$$\varphi(2^a 3^b) > (2^a 3^b)^{2/3} \;\Leftrightarrow\;
(2^a3^b)> 27 .$$
The last inequality  holds if, and only if, one of the following occurs:

(i)\, $b=1$ and $a\geq 4$; \quad (ii)\, $b=2$  and $a\geq 2$;\quad (iii)\, $b\geq 3$  and
$a\geq 1$. 

Finally, let $n=uv$, where $(u,v)=1$ and $u=2^a3^b$, with $a+b\ge 1$. By the above, if
$u\not \in
E:=\{2,3,4,6,8,12,18,$ $24\}$, then $\varphi(n)>n^{2/3}$. On the other hand, if $u\in E$,
then $\varphi(u)\geq \frac{3}{5} u^{2/3}$. Thus, it suffices to
prove that $\varphi(v)> \frac{5}{3} v^{2/3}$.
Note that, for all primes $s\geq 11$, $\varphi(s)>\frac{5}{3} s^{2/3}$ and so
$\varphi(s^a)>s^{2(a-1)/3}\varphi(s)>\frac{5}{3}(s^{a})^{2/3}$.
For the exceptional cases $s=5,7$, we have that $\varphi(s^a)>\frac{5}{3}(s^a)^{2/3}$ if,
and only
if, $a\geq 2$. Also, observe that $\varphi(35)>\frac{5}{3}\cdot (35)^{2/3}$.  Hence, we
are left to consider only the integers $n=uv$, where $u\in E$ and $v\in \{1,5,7\}$.
Direct computations lead to the exceptions listed in the statement.
\end{proof}

The following  corollary will be used for $n=q=p^m$. 

\begin{cor}\label{pellegrini}
For all $n\geq 14$, we have $\varphi(n^2-1)> \max\{3n+21, 4n-1\}.$ 
\end{cor}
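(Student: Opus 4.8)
The plan is to feed $m=n^2-1$ into Lemma~\ref{Phi 3/2} and then compare the resulting power bound with the linear quantity $\max\{3n+21,4n-1\}$. First I would observe that for $n\geq 14$ we have $n^2-1\geq 195$, so $n^2-1$ lies well outside the finite exceptional set $\{1,2,3,4,6,8,10,12,18,24,30,42\}$ of Lemma~\ref{Phi 3/2}; hence $\varphi(n^2-1)>(n^2-1)^{2/3}$. Also, since $4n-1\geq 3n+21$ exactly when $n\geq 22$, the maximum equals $4n-1$ on the relevant tail, so it suffices to dominate $4n-1$.

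Next I would reduce the inequality $(n^2-1)^{2/3}>4n-1$ to a polynomial one by cubing: it is equivalent to $(n^2-1)^2>(4n-1)^3$, i.e.
\begin{equation*}
n^4-64n^3+46n^2-12n+2>0.
\end{equation*}
For $n\geq 64$ this is immediate, because $n^4-64n^3=n^3(n-64)\geq 0$ while $46n^2-12n+2>0$ for every $n$ (its discriminant is negative). Combining this with the first step gives, for all $n\geq 64$,
\begin{equation*}
\varphi(n^2-1)>(n^2-1)^{2/3}>4n-1=\max\{3n+21,4n-1\},
\end{equation*}
which settles the large values.

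The remaining, and genuinely obstructive, range is $14\leq n\leq 63$: here the crude estimate $(n^2-1)^{2/3}\sim n^{4/3}$ simply does not overtake the linear bound $4n$ (the exponent $2/3$ forces the crossover near $n=64$), so Lemma~\ref{Phi 3/2} alone is too weak and a finite verification is needed. I would dispatch these finitely many cases by computing $\varphi(n^2-1)$ directly and checking $\varphi(n^2-1)>3n+21$ for $14\leq n\leq 21$ and $\varphi(n^2-1)>4n-1$ for $22\leq n\leq 63$; in every instance the inequality holds with a wide margin, since $\varphi(n^2-1)$ is in fact of order $n^2$. To cut down the handwork one may exploit the factorization $n^2-1=(n-1)(n+1)$: for odd $n$ one has $\gcd(n-1,n+1)=2$ and thus $\varphi(n^2-1)=2\varphi(n-1)\varphi(n+1)$, and the extra factor $2$ lets the analogue of the power bound, $2(n^2-1)^{2/3}>4n-1$, already hold for $n\geq 8$, leaving only the even $n$ and a handful of small exceptional odd $n$ in the range to be checked by hand. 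The main obstacle is thus not any single hard estimate but the structural weakness of the $2/3$-exponent bound in the middle range, which is precisely why a small finite check cannot be avoided.
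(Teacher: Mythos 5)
Your proof is correct and follows essentially the same route as the paper's: for $n\geq 64$ apply Lemma~\ref{Phi 3/2} and reduce the claim, by cubing, to an explicit polynomial inequality, then settle the finitely many cases $14\leq n\leq 63$ by direct computation. Your additional touches (restricting to $4n-1$ on the tail $n\geq 22$, and using $\varphi(n^2-1)=2\varphi(n-1)\varphi(n+1)$ for odd $n$ to shrink the finite check) are harmless refinements of the same argument.
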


\begin{proof}
If $n\geq 64$, we may apply Lemma \ref{Phi 3/2}. In fact, for these values of $n$, it is
easy to see that 
$(n^2-1)^2> (3n+21)^3$ and $(n^2-1)^ 2> (4n-1)^ 3$. So $\varphi(n^2-1)>\max\{3n+21,
4n-1\}$. For the values 
$14\le n \leq 63$ we deduce this inequality from direct computations.
\end{proof}

In order to exclude that $H=\langle x,y\rangle$ is contained in a maximal subgroup of class $\mathcal{C}_6$
or class $\mathcal{C}_3$ we will use respectively Lemma \ref{C6} and Lemma \ref{C3}.

\begin{lemma}\label{C6}
Let  $x,y\in \SL_r(q)$, with $r$ an odd prime. Assume that $x^2=y^3=1$.
If $(xy)^6$ is not scalar,
the projective image $\overline H$ of $H$ is not contained in any subgroup 
$\overline M$  of $\PSL_r(q)$ having the following structure:
$\overline M=N.S$, where $N$ is a normal elementary abelian subgroup of order $r^2$,
and $S\le \SL_2(r)=\Sp_2(r)$ acts by conjugation on  $N$, according to 
the natural action of $\SL_2(r)$ on ${\F_r}^2$.
\end{lemma}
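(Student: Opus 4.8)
The plan is to argue by contradiction: assume $\overline H\le \overline M=N.S$ while $(xy)^6=z^6$ is not scalar, and then show that $z^6$ must in fact be scalar. I would work through the quotient map $\pi\colon \overline M\to \overline M/N\cong S\le \SL_2(r)$, under which the conjugation action of $\overline M$ on $N\cong \F_r^2$ is realised as the natural action of $S$ on $\F_r^2$. Writing $a=\pi(\overline x)$ and $b=\pi(\overline y)$, we have $a^2=1$ and $b^3=1$ in $\SL_2(r)$, and $\pi(\overline z)=ab$. Since $z^6$ is not scalar we must have $x\ne 1$ (otherwise $z=y$ and $z^6=1$); as $r$ is odd, $-I\notin\SL_r(q)$, and the only scalar of $\SL_r(q)$ of order dividing $2$ is the identity, so $x$ is a genuine non-scalar involution and $\overline x\ne 1$.

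First I would pin down $a$. In $\SL_2(r)$ with $r$ odd the only elements of order dividing $2$ are the central elements $\pm I$, so $a\in\{I,-I\}$. If $a=I$ then $\overline x\in\ker\pi=N$; but $N$ has odd order $r^2$, which would force $\overline x=1$, a contradiction. Hence $a=-I$, which is central in $\SL_2(r)$. Consequently $a$ and $b$ commute, so $\pi(\overline z)=ab=-b$ and $(ab)^6=b^6=1$. Therefore $\overline z^6\in\ker\pi=N$; the task reduces to showing this element of $N$ is in fact trivial.

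It remains to rule out a nontrivial $\overline z^6\in N$. Being a power of $\overline z$, the element $\overline z^6$ is centralised by $\overline z$, hence is fixed by the natural action of $\pi(\overline z)=-b$ on $\F_r^2$. Now $-b$ fixes a nonzero vector exactly when $1$ is an eigenvalue, i.e.\ when $\tr{-b}=2$, equivalently $\tr{b}=-2$. Since $b$ has order $1$ or $3$, its trace is $2$ (when $b=1$, or when $r=3$ and $b$ is unipotent of order $3$) or $-1$ (when $b$ has order $3$ and $r\ne 3$); as $r$ is odd, neither value equals $-2$. Thus $-b$ has no nonzero fixed vector, forcing $\overline z^6=1$, i.e.\ $z^6$ scalar, which contradicts the hypothesis. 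The main obstacle is precisely this last eigenvalue computation: in characteristic $3$ the order-$3$ elements of $\SL_2(r)$ are unipotent with trace $2$ rather than $-1$, so the argument cannot simply invoke eigenvalues that are primitive cube roots of unity and must instead be phrased uniformly through the trace condition $\tr{b}\ne -2$, which holds for every odd prime $r$.
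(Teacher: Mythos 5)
Your proof is correct, and it shares its opening move with the paper's proof but closes by a genuinely different mechanism. Both arguments start by pinning down the image of $\overline x$ in $S$: since the only involution of $\SL_2(r)$, $r$ odd, is the central element $-I$, that image is central, hence $\overline z^3\equiv \overline x \pmod N$ and $\overline z^6\in N$. From there the paper finishes through the action of $x$ on $N$: writing $(xy)^3=n_1x$ with $n_1\in N$ and using that $-I$ acts on $N\cong\F_r^2$ by inversion, i.e.\ $x^{-1}nx=n^{-1}$, it computes $(xy)^6=n_1xn_1x=n_1n_1^{-1}=1$ in one line, with no case distinctions and no trace computations. You instead finish through the action of $z$: the element $\overline z^6\in N$ is a fixed vector of $\pi(\overline z)=-b$, and $-b$ has no nonzero fixed vector because $\tr{b}\in\{2,-1\}$ can never equal $-2$ when $r$ is odd. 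Your route costs slightly more (the eigenvalue criterion for $2\times 2$ matrices of determinant $1$, plus the separate treatment of unipotent order-$3$ elements when $r=3$), but it is more conceptual — fixed-point-freeness rather than a direct identity — and it has the merit of treating explicitly the degenerate case $a=I$, where the odd order of $N$ forces $\overline x=1$; the paper's proof passes over this case silently, since its inversion step $x^{-1}nx=n^{-1}$ tacitly assumes that the coset $Nx$ equals $-I$ rather than the identity. One small inaccuracy: your aside that $-I\notin\SL_r(q)$ for $r$ odd fails in characteristic $2$, where $-I=I$; but the conclusion you actually use — that a scalar involution in $\SL_r(q)$ is trivial — is correct in all characteristics, since $\lambda^2=\lambda^r=1$ with $r$ odd forces $\lambda=1$.
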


\begin{proof}
Assume, by contradiction, that $\overline H\le \overline M$.
Since $r$ is odd, the only involution of $\SL_2(r)$ is the central one,  namely
$-I$. Thus $Nx$ is central in $\overline M/N\simeq S$. 
It follows $N(xy)^3=Nx^3y^3=Nx$, i.e., $(xy)^3x^{-1}=n_1\in N$.
Moreover, recalling from the statement that the action of  $Nx$  on  $N\simeq \F_r^2$ is defined by 
$n\mapsto  x^{-1}nx$, we get $x^{-1}nx=n^{-1}$, for all $n\in N$.
It follows $(xy)^3=n_1x$, with $n_1\in N$, whence $(xy)^6=n_1xn_1x=n_1n_1^{-1}=1$, in contrast with the assumption that
$(xy)^6$ is not scalar.
\end{proof}

\begin{lemma}\label{C3} For $q=p^m$, $m>1$, let $a\in \F_q^\ast$ have order
$q-1$. 
For all primes $s$
$$\F_p[a^s]=\F_q $$
except, possibly, when 
$s=\sum_{j=0}^{t-1}q_0^j$, with $q=q_0^t$ for some $t\geq 2$.

In particular the statement is true for $s=3$, $q\ne 4$ and for $s=5$, $q\ne 16$.
\end{lemma}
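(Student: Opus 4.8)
The plan is to reduce the equality $\F_p[a^s]=\F_q$ to a statement about which subfields of $\F_q$ contain the element $a^s$, and then to translate membership in a subfield into a divisibility condition on $s$. Since $a$ generates the cyclic group $\F_q^\ast$ of order $q-1$, the field $\F_p[a^s]$ equals $\F_q$ precisely when $a^s$ lies in no proper subfield of $\F_q$; and because every proper subfield is contained in a maximal one, it suffices to test the maximal subfields $\F_{q_0}$, where $q=q_0^t$ with $t$ a prime dividing $m$ and $q_0=p^{m/t}$.

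First I would record the membership criterion. Since $a$ has order $q-1$, one has $a^s\in\F_{q_0}$ if and only if $(a^s)^{q_0-1}=1$, that is, if and only if $(q-1)\mid s(q_0-1)$. Writing $q-1=(q_0-1)\,N$ with $N=\sum_{j=0}^{t-1}q_0^j=\frac{q-1}{q_0-1}$ and cancelling the factor $q_0-1$, this condition becomes simply $N\mid s$.

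The key observation is then that $s$ is prime while $N\ge q_0+1\ge 3$, so $N\mid s$ forces $N=s$. Hence $a^s$ lies in the maximal subfield $\F_{q_0}$ exactly when $s=\sum_{j=0}^{t-1}q_0^j$ with $q=q_0^t$. Ranging over the maximal subfields gives that $\F_p[a^s]\ne\F_q$ can occur only for $s$ of this shape (with $t$ prime, hence in particular $t\ge 2$), which is the asserted list of possible exceptions.

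For the final assertions I would simply solve the equation $\sum_{j=0}^{t-1}q_0^j=s$ for the two relevant primes. For $s=3$, the constraints $q_0\ge 2$ and $t\ge 2$ leave only $t=2$, $q_0=2$, i.e.\ $q=4$; for $s=5$ they leave only $t=2$, $q_0=4$, i.e.\ $q=16$. Thus outside $q=4$ (resp.\ $q=16$) the sum can never equal $3$ (resp.\ $5$), and the claim follows. The only real work is the divisibility reduction of the second step; once membership in a subfield is rephrased as $N\mid s$, the primality of $s$ does all the remaining work, and the two special cases reduce to a one-line search.
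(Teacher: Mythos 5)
Your proof is correct. It shares the paper's overall strategy --- show that if $a^s$ lies in a proper subfield $\F_{q_0}$ of $\F_q$, then the primality of $s$ forces $s=\sum_{j=0}^{t-1}q_0^j$ --- but your key step runs the divisibility in the opposite direction and is more streamlined. The paper works with the order $k$ of $a^s$: after disposing of the case $(s,q-1)=1$, it assumes $s\mid q-1$, notes that $k=(q-1)/s$ must divide $q_0-1$, deduces the inequality $s\ge N$ where $N=\frac{q-1}{q_0-1}=\sum_{j=0}^{t-1}q_0^j$, and then needs a separate argument (the prime $s$ divides $(q_0-1)N$ but cannot divide $q_0-1$, by that same inequality) to conclude $s\mid N$ and hence $s=N$. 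You instead convert the membership $a^s\in\F_{q_0}$ directly into the divisibility $(q-1)\mid s(q_0-1)$ and cancel the factor $q_0-1$ to obtain $N\mid s$; since $s$ is prime and $N\ge q_0+1\ge 3$, this yields $N=s$ in one stroke, with no case split on $\gcd(s,q-1)$ and no intermediate inequality. Your explicit reduction to maximal subfields also gives a marginally stronger conclusion than the one stated (the exceptional $t$ may be taken prime), which the paper does not record. The concluding finite searches settling $s=3$, $q\ne 4$ and $s=5$, $q\ne 16$ are the same in both arguments.
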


\begin{proof}
If $(s,q-1)=1$ our claim is obvious. So we may assume that $q-1=sk$. 
Suppose  $\F_p[a^s]=\F_{q_0}$ for some $q_0$ such that $q=q_0^t$ with $t\ge 2$. Since
$k$ is the order of $a^s$, we have $q_0-1\ge k$, whence: 
$$sk=q-1= (q_0-1)\left(\sum_{j=0}^{t-1}q_0^j\right)\ge
k\left(\sum_{j=0}^{t-1}q_0^j\right).$$
This gives $s\ge \sum_{j=0}^{t-1}q_0^j$. If $s$ divides $q_0-1$ we get
the contradiction 
$$\sum_{j=0}^{t-1}q_0^j\le s \le q_0-1.$$
Thus $s$, being a prime,  must divide $\sum_{j=0}^{t-1}q_0^j$, whence $s= \sum_{j=0}^{t-1}q_0^j$.
\end{proof}

\section{Dimension 3}\label{dim3}

Let $X,y$ be elements of $\SL_3(\F)$ such that their projective images have orders 2,3 and
generate
an absolutely irreducible subgroup $H$.  Then, $X=\omega^jx$, for some $j=0,1,2$,
with $x^2=I$.  We may replace $X$ by $x$, leaving unchanged the projective image
of $H$. The involution $x$ must be conjugate to  $\blockdiag\left(-1,\begin{pmatrix}
0&1\\
1&0\end{pmatrix}\right)$. Hence $x$ has a 2-dimensional eigenspace $W$, relative to the
eigenvalue $-1$. 
Clearly  $W\cap yW$  has dimension $1$. So
we may assume  $W=\langle e_1, e_2\rangle $ with $e_2=ye_1$, and $ye_2\not\in W$. 
Setting $ye_2=e_3$, it follows $ye_3=\lambda e_1$. The condition $\det(y)=1$ gives
$\lambda=1$.
In particular $y^3=I$. Set $xe_3=ae_1+be_2+e_3$ with $a,b\in \F$. For $p=2$, we need
$(a,b)\ne(0,0)$ 
in order that $x$ is an involution. For $p$ odd and $(a,b)=(0,0)$ the group $H$ is
isomorphic to $\Alt(4)$. Hence, for our purposes, we may assume: 

\begin{equation}\label{generators3}
x=\begin{pmatrix}
-1&0&a\\
0&-1&b\\
0&0&1
\end{pmatrix},\ (a,b)\ne (0,0), \quad 
y=\begin{pmatrix}
0&0&1\\
1&0&0\\
0&1&0
\end{pmatrix}.
\end{equation}

\begin{lemma}\label{irreducibility} 
Let $x,y$ be defined as in \eqref{generators3}.
Then $H$ is absolutely irreducible  if, and only if, the following conditions hold. For all $j\in\{0,1,2\}$:
\begin{itemize}
\item[\rm{(i)}] $p\ne 2$ and  $(a,b)\ne \left(2\omega^j, 2\omega^{2j}\right)$;
\item[\rm{(ii)}] $b\ne -a\omega^j-2\omega^{2j}$.
\end{itemize}
\end{lemma}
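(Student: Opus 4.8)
The plan is to use the fact that, since $\F$ is algebraically closed, absolute irreducibility of $H$ coincides with ordinary irreducibility (by Schur's lemma $\mathrm{End}_{\F H}(\F^3)=\F$ once $H$ acts irreducibly). Hence it suffices to decide when $H=\langle x,y\rangle$ admits an invariant subspace of dimension $1$ or $2$, and I would organise the whole argument around the element $y$, which has order $3$ and thus controls the list of candidate invariant subspaces: when $p\neq 3$ it is semisimple with the three distinct eigenvalues $\omega^j$, while when $p=3$ (so $\omega=1$) it is a single regular unipotent Jordan block with a unique invariant subspace in each dimension.

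First I would treat the invariant lines. An $H$-invariant line is a common eigenvector of $x$ and $y$, so I would start from the eigenvectors $v_\mu=(1,\mu^2,\mu)$ (column vectors, $\mu^3=1$) of $y$ and impose that $v_\mu$ be an eigenvector of $x$ as well. A direct computation of $xv_\mu$ forces the $x$-eigenvalue to be $1$ and gives $a=2\mu^2,\ b=2\mu$; writing $\mu=\omega^{2j}$ this becomes exactly $(a,b)=(2\omega^{j},2\omega^{2j})$. Thus an invariant line exists precisely when this equality holds for some $j$. Here the characteristic enters: for $p=2$ the pair $(2\omega^j,2\omega^{2j})$ degenerates to the excluded pair $(0,0)$, so there is never an invariant line when $p=2$. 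This is the content of condition (i).

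Next I would handle the invariant planes dually: a $2$-dimensional $H$-invariant subspace is the kernel of a common left eigenvector (covector) of $x$ and $y$. The left eigenvectors of $y$ are $w_\beta=(1,\beta,\beta^2)$ (row vectors, $\beta^3=1$), and imposing $w_\beta x=\alpha w_\beta$ forces $\alpha=-1$ together with the single scalar equation $a+b\beta+2\beta^2=0$. Solving for $b$ and writing $\beta=\omega^{2j}$ reproduces $b=-a\omega^{j}-2\omega^{2j}$, so an invariant plane exists precisely when this holds for some $j$; this is condition (ii). Combining the two analyses, $H$ is reducible if and only if, for some $j$, the equality in (i) or the equality in (ii) holds, which is exactly the negation of the asserted characterisation.

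The two eigenvalue computations are routine; the point requiring care is the behaviour in the small characteristics $p=2$ and $p=3$, which I expect to be the main (though still elementary) obstacle. For $p=2$ one must record that the line-obstruction collapses onto the already-excluded pair $(0,0)$, so that irreducibility is governed by (ii) alone. For $p=3$, where $\omega=1$ and $y$ is no longer semisimple, the enumeration of eigenvectors and eigencovectors must be replaced by the unique invariant line $\langle(1,1,1)\rangle$ and the unique invariant plane $\{v_1+v_2+v_3=0\}$ of the regular unipotent block, and I would verify that these are $x$-invariant exactly when $(a,b)=(2,2)$ and when $b=-a-2$ respectively, i.e. the specialisations of (i) and (ii) at $\omega=1$. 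Checking that the uniform statement survives intact across these degenerate cases is the only genuinely delicate step.
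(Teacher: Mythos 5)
Your proposal is correct and follows essentially the same route as the paper's proof: invariant lines are detected as common eigenvectors of $x$ and $y$ (the paper's witness $u=(a,b,2)^T$ is a scalar multiple of your $v_\mu$), and invariant planes are handled by passing to the transpose/dual action, where the paper's covector $(1,\omega^{2j},\omega^j)$ is exactly your $w_\beta$ with $\beta=\omega^{2j}$. Your explicit treatment of the degenerate case $p=3$ (where $y$ is a single Jordan block) is a detail the paper leaves implicit, but it does not change the method.
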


\begin{proof} 
$H$ fixes some  $1$-dimensional space precisely when $p\neq 2$ and $(a,b)=(2\omega^j,$ $
2\omega^{2j})$, for some $j=0,1,2$. 
In this case, setting $u= (a, b, 2)^T $ 
we have $u\ne 0$ and $xu=u$, $yu\in \left\langle u\right\rangle$.
 
$H$ fixes a $2$-dimensional space if, and only if, $H^T$ fixes  a $1$-dimensional space. 
This happens  only if  $b= -a\omega^j-2\omega^{2j}$, for some $j=0,1,2$.  In this case,
taking $w=(1,\omega^{2j},\omega^{j})^T$, we have $x^Tw=-w$ and $y^Tw=\omega^{2j}
w$.
\end{proof}

\begin{lemma}\label{monomial}  
Assume  that $H$ is absolutely irreducible and that $ab\ne 1$.
Then $H$ is not conjugate to a subgroup of the monomial group. 
\end{lemma}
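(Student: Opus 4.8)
Recall that the monomial group is the stabiliser of a decomposition $\F^3=L_1\oplus L_2\oplus L_3$ into three lines, so $H$ is conjugate to a subgroup of the monomial group exactly when $H$ permutes such a set $\{L_1,L_2,L_3\}$. Assuming this, the first step is to use absolute irreducibility to force the permutation action to be transitive: if a proper nonempty subset of $\{L_1,L_2,L_3\}$ were $H$-invariant, its sum would be a proper nonzero $H$-invariant subspace. Since $y$ has order $3$, its image in $\Sym(3)$ has order $1$ or $3$; an image of order $1$ would make the action factor through $\langle x\rangle$, an involution, which cannot act transitively on three points. Hence $y$ acts as a $3$-cycle. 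Picking $0\ne w_1\in L_1$ and setting $w_2=yw_1$, $w_3=y^2w_1$, the $w_i$ span the three lines and are cyclically permuted by $y$ just as $e_1,e_2,e_3$ are.

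Since $x^2=I$, its image in $\Sym(3)$ fixes at least one line; as replacing $w_1$ by $yw_1$ merely relabels the lines, I may assume $x$ fixes $L_1$, say $xw_1=\mu w_1$ with $\mu=\pm1$ (and $\mu=1$ if $p=2$). Either $x$ fixes all three lines or it fixes $L_1$ and interchanges $L_2,L_3$. The first possibility is quickly excluded: three independent eigenvectors of $x$ would have to comprise the unique $(+1)$-eigenline $\langle(a,b,2)^T\rangle$ and two lines in the $2$-dimensional $(-1)$-eigenspace $W=\{v_3=0\}$; tracking the third coordinates $u,t,s$ of $w_1,w_2,w_3$ shows two of $s,t,u$ must vanish, so the $L_i$ are the coordinate lines and $(a,b,2)^T$ is proportional to some $e_j$, forcing $a=b=0$ against $(a,b)\ne(0,0)$. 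The same count rules this out when $p=2$, where $W$ is the whole fixed space of the unipotent $x$.

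The heart of the argument is the remaining configuration, where $x$ fixes $L_1$ and swaps $L_2,L_3$. I would impose $xw_2\parallel w_3$ (the relation $xw_3\parallel w_2$ being then automatic) and read off the resulting scalar conditions componentwise. When $\mu=-1$ one has $u=0$, i.e. $w_1=(s,t,0)^T$, and the proportionality $xw_2\parallel w_3$ collapses to $s=bt$ together with $t=ab\,t$ with $t\ne0$, giving $ab=1$ immediately; the identical computation in characteristic $2$ (where necessarily $\mu=1$, $u=0$) again yields $ab=1$.

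The delicate case is $\mu=+1$, so that $w_1=(a,b,2)^T$ is the $(+1)$-eigenvector, $w_2=(2,a,b)^T$, $w_3=(b,2,a)^T$. Writing out $xw_2\parallel w_3$ gives the system $a^2b-2a-b^2=0$ and $ab^2-a^2-2b=0$. I expect the main obstacle to be disentangling these: subtracting them factors as $(a-b)(ab+a+b-2)=0$, while adding them yields $(a+b)(ab-2)=a^2+b^2$. If $a=b$, the first equation gives $a\in\{0,2,-1\}$, that is, the excluded value $a=b=0$, the reducible pair $(a,b)=(2,2)=(2\omega^0,2\omega^{0})$, or $ab=1$. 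If instead $ab+a+b=2$, substituting into the sum relation forces $a^2+ab+b^2=0$; setting $S=a+b$ and $P=ab$ this reads $P=S^2$ with $S^2+S-2=0$, so $(S,P)=(1,1)$, giving $ab=1$, or $(S,P)=(-2,4)$, giving the reducible pair $\{a,b\}=\{2\omega,2\omega^2\}=\{2\omega^1,2\omega^{2}\}$. In every branch either $ab=1$ or $(a,b)$ is one of the configurations excluded by Lemma~\ref{irreducibility}(i); since $H$ is assumed absolutely irreducible and $ab\ne1$, all branches are impossible, and therefore $H$ is not conjugate to a subgroup of the monomial group.
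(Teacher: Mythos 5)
Your proof is correct and follows the same skeleton as the paper's: monomiality means $H$ permutes three lines, absolute irreducibility forces $y$ to act as a $3$-cycle, $x$ fixes one line and swaps the other two, and the proportionality $xw_2\parallel w_3$ produces $ab=1$. The genuine difference is in how the eigenvalue $\mu$ of $x$ on its fixed line is handled. The paper kills your delicate case $\mu=+1$ at the outset with a determinant observation: if $x$ interchanges the two other lines, its restriction to their span has determinant $-1$, so $\det(x)=1$ forces $xw_1=-w_1$; and if $x$ fixed all three lines, two of them would have to carry the eigenvalue $-1$, so one could again choose $w_1$ with $xw_1=-w_1$. Thus the paper only ever needs your computation with $w_1\in W$ (third coordinate zero), which yields $ab=1$ in two lines, and it never touches the system $a^2b-2a-b^2=0$, $ab^2-a^2-2b=0$. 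Your resolution of that system is correct — the factorizations $(a-b)(ab+a+b-2)=0$ and $P=S^2$ with $S^2+S-2=0$ check out, and every branch is indeed either $ab=1$ or one of the reducible pairs $(2\omega^j,2\omega^{2j})$ — but it costs extra algebra and an additional appeal to the explicit exclusions of Lemma~\ref{irreducibility}(i), which the determinant trick renders unnecessary. What your route shows, in compensation, is that the excluded reducible pairs of Lemma~\ref{irreducibility}(i) are exactly the degenerations where a monomial configuration with $xw_1=+w_1$ would otherwise exist; but if brevity is the goal, inserting the one-line determinant argument collapses your proof precisely to the paper's.
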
 

\begin{proof}
Assume that $H$ acts monomially on a basis $B=\left\{v_1, v_2, v_3\right\}$. 
We may suppose $x v_1\in \left\langle v_1\right\rangle$ and,
furthermore, $xv_1=-v_1$. The latter claim is clear if $x$ fixes all the subspaces
$\left\langle v_j\right\rangle$; it follows from $x^2=I$ and the determinant condition,
if $x$ interchanges 
$\left\langle v_2\right\rangle$ with $\left\langle v_3\right\rangle$. 
Thus $v_1=(c,d,0)^T$ for some $(c,d)\ne (0,0)$.  By the irreducibility of $H$, we
may assume $v_2=yv_1= (0,c,d)^T$, $v_3=yv_2=(d,0,c)^T$. The condition $(a,b)\ne (0,0)$ 
implies that $B$ cannot be the canonical basis (up to scalars). Thus $cd\ne 0$: in
particular
$x$ cannot fix $\left\langle v_2\right\rangle$ and $\left\langle v_3\right\rangle$. 
From $xv_2=(ad,-c+bd,d)^T$ we see that $xv_2\in \left\langle v_3\right\rangle$ if, and
only if, $ab=1$.
\end{proof}

\begin{lemma}\label{powers of z} 
Assume that $H$ is absolutely irreducible and 
that $ab\ne 1$. Set $z=xy$.

\begin{itemize}
\item[\rm{(i)}] $z^j$ is not scalar for $j\le 4$ and for $j=6$.
\item[\rm{(ii)}] If $z^5$ is scalar then both $a$ and $b$ are roots of $t^6-4t^3-1$. 

\noindent More precisely:
\begin{itemize}
\item[$\bullet$]  if $p=2$, then
$(a,b)\in\left\{\left(\omega^j,1\right),\left(1,\omega^j\right),\left(\omega^j,
\omega^j\right)\right\}$, for some $j=1,2$;
\item[$\bullet$] if $p\ne 2$, then  $b=\frac{1\pm \sqrt 5}{2}\thinspace \omega^{j}$,
$a=b\omega^{j}$ for some $j=0,1,2$.
\end{itemize}

\item[\rm{(iii)}] If $z^7$ is scalar then:

\begin{itemize}
\item[$\bullet$]  either $p=2$ and $(a,b) \in \{(\omega^j, 0),(0,\omega^j)\}$, for some
$j\in \{0,1,2\}$;
\item[$\bullet$] or $a\ne 0$, $b\ne 0$ and  both $a$ and $b$ are roots of
$$R=t^{15} - 16t^{12} + 59t^9 - 67t^6 - 37t^3 + 8.$$
$R$ splits over a field containing an element of order $\frac{21}{(p,21)}$ and factorizes
over $\Z$ as
$$(t^2+t+2)(t^3-2t^2-t+1)(t^4-t^3-t^2-2t+4)(t^6+2t^5+5t^4+3t^2+t+1).$$
\end{itemize}
\end{itemize}
\end{lemma}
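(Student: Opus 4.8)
The plan is to reduce every assertion to a single polynomial divisibility condition and then to an elimination in the two parameters $a,b$. First I would compute $z=xy$ from \eqref{generators3}, obtaining
\[
z=\begin{pmatrix} 0 & a & -1 \\ -1 & b & 0 \\ 0 & 1 & 0 \end{pmatrix},
\]
whose characteristic polynomial is $c(t)=t^3-bt^2+at-1$ (so $b=\tr{z}$, $a$ is the second elementary symmetric function of the eigenvalues, and $\det z=1$). The decisive preliminary observation is that $z$ is non-derogatory: the vector $e_1$ is cyclic, since $e_1,\ ze_1=-e_2,\ z^2e_1=-(a,b,1)^T$ are linearly independent. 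Hence the minimal polynomial of $z$ equals $c(t)$, and therefore $z^k=\mu I$ holds for a scalar $\mu$ if and only if $c(t)\mid t^k-\mu$. Since the roots $\lambda_1,\lambda_2,\lambda_3$ of $c$ then satisfy $\lambda_i^k=\mu$ while $\lambda_1\lambda_2\lambda_3=1$, one gets $\mu^3=1$ for free. Thus ``$z^k$ is scalar'' is equivalent to the reduction $t^k\bmod c(t)$ being a constant.

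Concretely I would write $t^k\equiv A_kt^2+B_kt+C_k\pmod{c(t)}$; from $t^3\equiv bt^2-at+1$ one gets $A_{k+1}=bA_k+B_k$, $B_{k+1}=-aA_k+C_k$, $C_{k+1}=A_k$, with $A_0=A_1=0$, $A_2=1$, so that $A_k=bA_{k-1}-aA_{k-2}+A_{k-3}$ and $B_k=-aA_{k-1}+A_{k-2}$ (in fact $A_k=h_{k-2}$, the complete homogeneous symmetric function of the eigenvalues). The criterion becomes: $z^k$ is scalar if and only if $A_k=B_k=0$, and then $\mu=C_k$. A useful symmetry is that passing to $z^{-1}$, whose characteristic polynomial is $t^3-at^2+bt-1$, interchanges the roles of $a$ and $b$, and $z^k$ is scalar iff $(z^{-1})^k$ is; this reduces each statement about $a$ to the corresponding one about $b$.

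For part (i) I would read off the low reductions: $A_1=0,B_1=1$; $A_2=1$; $(A_3,B_3)=(b,-a)$, not both zero because $(a,b)\ne(0,0)$; and $B_4=1-ab\ne0$ by the hypothesis $ab\ne1$. The remaining case $k=6$, with $A_6=b^4-3ab^2+a^2+2b$ and $B_6=-ab^3+2a^2b+b^2-2a$, is genuinely computational: I would show $A_6,B_6$ have no common zero compatible with the constraints by computing a resultant in one variable and checking that each of its roots violates either $ab\ne1$ or the irreducibility conditions of Lemma \ref{irreducibility}. For part (ii) I would solve $A_5=b^3-2ab+1=0$, $B_5=a^2-ab^2+b=0$: since $b=0$ gives $A_5=1$, I may take $a=\frac{b^3+1}{2b}$ from $A_5=0$ and substitute into $B_5=0$, which collapses to $b^6-4b^3-1=0$; the $a\leftrightarrow b$ symmetry gives the same equation for $a$. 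Factoring $t^6-4t^3-1$, whose roots are $\frac{1\pm\sqrt5}{2}\,\omega^j$, and separating characteristics then yields the two displayed lists, the case $p=2$ arising because there $t^6-4t^3-1=(t^3-1)^2$ retains only the cube roots of unity.

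Part (iii) follows the same route for $k=7$, and this is where the real work lies. Setting $A_7=B_7=0$ and eliminating one variable produces, after discarding the component where a coordinate vanishes, the degree-$15$ polynomial $R$; I would verify its integer factorization $(t^2+t+2)(t^3-2t^2-t+1)(t^4-t^3-t^2-2t+4)(t^6+2t^5+5t^4+3t^2+t+1)$ directly and identify the roots with elementary symmetric functions of triples of $21$st roots of unity having equal $7$th powers, which pins down the splitting field as one containing an element of order $\frac{21}{(p,21)}$. The boundary cases $a=0$ and $b=0$ (not roots of $R$, since $R(0)=8$) must be handled on their own: with $b=0$ the reduction gives $A_7=-2a$, $B_7=1-a^3$, so $A_7=B_7=0$ forces $a^3=1$ and $2a=0$, i.e.\ $p=2$ and $a=\omega^j$, and the $a\leftrightarrow b$ symmetry yields $(0,\omega^j)$. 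The main obstacle throughout is the $k=6$ and $k=7$ elimination, and in particular controlling the extraneous factors so that the clean conclusion — both $a$ and $b$ are roots of the single polynomial $R$, with the stated characteristic-$2$ exceptions — survives.
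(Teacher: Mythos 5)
Your proposal is correct and follows essentially the same route as the paper: your reduction $t^k\equiv A_kt^2+B_kt+C_k \pmod{c(t)}$ is exactly the paper's computation of $z^ke_3$ in the basis $e_1,e_2,e_3$ (one has $f_1=-B_k$, $f_2=A_k$, $f_3=C_k$), and both arguments then eliminate one variable via resultants/substitution and exclude the resulting loci using $(a,b)\ne(0,0)$, $ab\ne 1$ and Lemma \ref{irreducibility}. The only differences are cosmetic: for $k=6$ the paper shortcuts your resultant by the factorization $f_1=(2a-b^2)(1-ab)$, and your division by $2b$ in part (ii) must be (and, in your sketch, is) replaced by a direct argument when $p=2$, where the paper's characteristic-free resultant $\mathrm{Res}(f_1,f_2)=b^2(b^6-4b^3-1)$ avoids the case split.
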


\begin{proof}  We consider the action of $z$ on the canonical basis $\left\{e_1, e_2,
e_3\right\}$.

\smallskip
(i)\enskip For $j\le 4$ our claim follows easily from the assumptions, since:
$$\begin{array}{cc}
ze_3= -e_1,\hfill &z^2e_3= e_2,\hfill \\
z^3e_3= ae_1+be_2+e_3,\hfill &z^4e_3= (a b - 1)e_1+(-a + b^2)e_2+be_3.\hfill 
\end{array}$$
Assume $z^6$ to be scalar and note that $z^6e_3=f_1 e_1+f_2 e_2+f_3e_3$, where
$$f_1= -2 a^2 b + a b^3 + 2 a - b^2,\quad f_2=a^2 - 3 a b^2 + b^4 + 2 b.$$
Clearly $b\ne 0$ as $b=0$ would give also $a=0$.
From $f_1= (2a-b^2)(1-ab)$,  we get $2a=b^2$ as we are excluding $ab=1$. So $p\ne 2$ and
$a=\frac{b^2}{2}$. Substitution in
$f_2$ gives
$b^4=8b$. It follows that $(a,b)= \left( 2 \omega^{j},
2\omega^{2j}\right)$,
against the assumption that $H$ is absolutely irreducible.

\smallskip
(ii)\enskip Assume $z^5$ to be scalar and note that $z^5e_3= f_1 e_1+f_2 e_2+f_3e_3$
where
$$f_1=-a^2 + a b^2 - b, \quad f_2= -2 a b + b^3 + 1.$$
Notice that $b\ne 0$, since $b=0$ would give $a=0$.   Similarly $a\ne 0$.
Considering $f_1$ and $f_2$ as polynomials in $a$, their resultant $\mathrm{Res}(f_1,f_2)$
is
$b^2(b^6-4b^3-1)$.  By \cite[Theorem 5.7, p. 325]{Jac}, where 
the definition and the properties of resultants are given, the non-zero values of $b$
satisfying 
$f_1=f_2=0$ must be roots of
$$R=t^6-4t^3-1=(t^2-t-1) (t^2-\omega t-\omega^2)(t^2-\omega^2t-\omega).$$
Calling $t_1,t_2$ the roots of $t^2-t-1$, the roots of $P$ are  $t_k \omega^j$, 
$k=1,2$,  $j=0,1,2$. Since $b$ is a root of $P$,
we may set $b=t_k\omega^j$, whence $b^3+1= 2(t_k+1)$. 
Now we equate to zero the coefficient of $e_2$. If $p=2$, we may assume $t_1=\omega$, 
whence $b$ must be a power of $\omega$.
If $p\ne 2$, noting that $\frac{1}{t_k}= t_k-1$, we get 
$$a=\frac{2(t_k+1)}{2t_k\omega^j}= (t_k^2-1)\omega^{2j}=t_k\omega^{2j}=b\omega^j.$$

Direct computation shows that $z^5$ is scalar for the values given in the statement.

\smallskip
(iii)\enskip Assume that $z^7$ is scalar and note that $z^7e_3=f_1 e_1+f_2 e_2+f_3e_3$,
where
$$f_1=a^3 - 3 a^2 b^2 + a b^4 + 4 a b - b^3 - 1,\quad f_2= 3 a^2 b - 4 a b^3 - 2 a + b^5
+3
b^2.$$
If $a=0$, then $f_1=-b^3-1$ and $f_2=b^5+3b^2$. It follows that $p=2$ and $b^3=1$.
If $b=0$, then $f_1=a^3-1$ and $f_2=2a=0$. It follows that $p=2$ and $a^3=1$.
So, assume  $a,b\neq 0$. 
The resultant $R$ of $f_1, f_2$ with respect to $a$ (and to $b$) is:
\begin{equation}\label{resultant}
R=t^{15} - 16t^{12} + 59t^9 - 67t^6 - 37t^3 + 8.
\end{equation}
Now, we can factorize $R$ as
$$(t^2+t+2)(t^3-2t^2-t+1)(t^4-t^3-t^2-2t+4)(t^6+2t^5+5t^4+3t^2+t+1).$$
One can also verify that $R$ splits over a field containing an element of order
$\frac{21}{(p,21)}$.
\end{proof}

\begin{lemma}\label{forms} 
Assume that $H$ is absolutely irreducible.
\begin{itemize}
\item[\rm{(i)}]  If $H\le \SL_3(q^2)$, then $H\le \SU_3(q^2)$ if, and only if, $b=a^q$.
\item[\rm{(ii)}] If $q$ is odd and $H\le \SL_3(q)$, then $H\le \SO_3(q)$ if, and only if,
$b=a$.
\item[\rm{(iii)}]  Let $a=0$. There exist
no non-degenerate  hermitian form and no non-degenerate orthogonal  form fixed by $H$ up
to scalars.  
\end{itemize}
\end{lemma}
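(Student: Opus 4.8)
The plan is to reduce the existence of an invariant form to a single trace identity, exploiting that $H$ is absolutely irreducible. The one computation I need at the outset is that, for the generators in \eqref{generators3} and $z=xy$, one has $\tr{z}=b$ and $\tr{z^{-1}}=a$: indeed $z$ has diagonal $(0,b,0)$, and since $\det z=1$ the trace of $z^{-1}$ equals the sum of the principal $2\times2$ minors of $z$, which is $a$. These two scalars drive everything.

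Now suppose $H$ fixes a non-degenerate hermitian (resp.\ symmetric) form with invertible Gram matrix $J$ up to scalars, say $\cc{g}^{T}Jg=\lambda_g J$ with the bar denoting $\sigma$ (resp.\ $g^{T}Jg=\lambda_g J$), for all $g\in H$. Rewriting gives $JgJ^{-1}=\lambda_g\,\cc{g}^{-T}$ (resp.\ $\lambda_g\,g^{-T}$), and taking traces at $g=z$ yields
\begin{equation}\label{trace-id}
b=\lambda_z\,a^{q}\quad(\text{hermitian}),\qquad b=\lambda_z\,a\quad(\text{orthogonal}).
\end{equation}
This single identity does most of the work. For the ``only if'' part of (i) and (ii) the groups $\SU_3(q^2)$ and $\SO_3(q)$ fix their defining forms exactly ($\lambda_g\equiv1$), so \eqref{trace-id} gives $b=a^{q}$, resp.\ $b=a$. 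For (iii) I put $a=0$: then $a^{q}=0$ and $a=0$, so \eqref{trace-id} forces $b=0$ whatever $\lambda_z$ may be; but $(a,b)\ne(0,0)$ gives $b\ne0$, a contradiction, and hence no non-degenerate hermitian or orthogonal form can be fixed even up to scalars.

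For the converse in (i) and (ii) I would build the form directly. Since $\cc{y}=y$ and $y^{3}=I$, the relation $\cc{y}^{T}Jy=J$ says precisely that $J$ commutes with the cyclic permutation $y$, i.e.\ $J=rI+sy+\cc{s}\,y^{2}$ is circulant, with $r\in\F_q$ (and $s\in\F_{q^2}$, $\cc{s}=s^{q}$ in the hermitian case, $s\in\F_q$ with $J$ symmetric in the orthogonal case). Imposing $\cc{x}^{T}Jx=J$ then reduces to a small linear system in $r,s$ coming from the $(1,3),(2,3),(3,1),(3,2)$ and $(3,3)$ entries: the off-diagonal conditions become mutually (Frobenius-)conjugate, hence reduce to a single equation, exactly when $b=a^{q}$ (resp.\ when $b=a$), and the $(3,3)$-condition is then a linear consequence of them. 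Thus a non-zero solution $(r,s)$ survives, producing a non-zero $H$-invariant $J$. Because $H$ is absolutely irreducible, the radical of any non-zero invariant form is a proper $H$-submodule and so is $\{0\}$; hence $J$ is automatically non-degenerate. Finally $\det x=\det y=1$ forces $H\le\U_3(q^2)\cap\SL_3(q^2)=\SU_3(q^2)$ (resp.\ $\O_3(q)\cap\SL_3(q)=\SO_3(q)$), giving the ``if'' direction.

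The step I expect to be the real work is this last construction: verifying that, after the circulant reduction, the $x$-invariance equations are consistent and leave a genuine solution space precisely under $b=a^{q}$ (resp.\ $b=a$)—in particular that the $(3,3)$-equation is redundant rather than forcing $r=s=0$. Absolute irreducibility makes the conclusion painless once a non-zero solution is found, since it supplies non-degeneracy for free. Characteristic $2$ must be tracked separately, as the coefficients $2$ in the off-diagonal equations then vanish, but the same circulant reduction applies and yields the stated conclusion.
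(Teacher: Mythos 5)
Your proposal is correct, and it takes a genuinely different route from the paper. For the ``only if'' directions and for (iii), the paper argues via characteristic polynomials: Scott's formula forces the centralizer of $z$ to have dimension exactly $3$, so $z$, $z^{-1}$, $z^{\sigma}$ are all conjugate to companion matrices, membership in $\SU_3(q^2)$ or $\SO_3(q)$ is read off by comparing $\chi_{z^{\sigma}}$ or $\chi_z$ with $\chi_{z^{-1}}$, and the converse is obtained by citing \cite[Corollary 3.2]{PTV}; for (iii) the paper first uses (i)--(ii) to see that the multiplier of $y$ must be $\omega^{\pm1}\ne 1$, then runs an explicit computation with a parametrized form $J$, ending in $b^3=-8$ against Lemma \ref{irreducibility}(ii). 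Your trace identity $b=\lambda_z a^{q}$ (resp.\ $b=\lambda_z a$) sidesteps all of this machinery, and it is especially effective for (iii): since the scalar $\lambda_z$ is absorbed, $a=0$ forces $b=0$ immediately, with no case analysis of multipliers. For the ``if'' direction, your explicit construction replaces the appeal to \cite{PTV}, and the computation you deferred does check out: with $b=a^{q}$ and $J=rI+sy+s^{q}y^{2}$, the condition $\cc{x}^{T}Jx^{\sigma}=J$ reduces exactly to the single $\F_{q^2}$-equation
\begin{equation*}
ra+s^{q}a^{q}+2s=0,
\end{equation*}
because the $(3,1)$ and $(3,2)$ entries are the Frobenius conjugates of the $(1,3)$ and $(2,3)$ entries (and the two latter are conjugates of each other precisely when $b=a^q$), while the $(3,3)$ entry vanishes as a consequence; this is two $\F_q$-linear conditions on the three $\F_q$-parameters $(r,s)\in\F_q\times\F_{q^2}$, so a nonzero solution exists, and absolute irreducibility makes the resulting form non-degenerate, as you say (the orthogonal case is identical with $\tau=\id$, and characteristic $2$ causes no failure). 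The trade-off: your argument is elementary and self-contained, avoiding Scott's formula and the external corollary entirely, whereas the paper's cyclicity-plus-\cite{PTV} mechanism is the one that transfers verbatim to their dimension-$5$ analogue (Lemma \ref{forms2}), where exhibiting an invariant form explicitly would be considerably more painful.
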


\begin{proof}
The characteristic polynomials of $z=xy$ and $z^{-1}$ are respectively:
\begin{equation}\label{carpolz}
\chi_z(t)= t^3-bt^2+at-1,\quad \chi_{z^{-1}}(t)= t^3-at^2+bt-1.
\end{equation}
The centralizers of $x$, $y$ and $z$ have respective dimensions $5$, $3$ and $\geq 3$.
As $5+3+3=9+2$ and  $H$ is absolutely irreducible, by Scott's formula (see \cite[Formula
$(2)$, page 324]{PTV}),
the centralizer of $z$ must have the smallest possible dimension, i.e., $3$.
Thus, by  \cite[Theorem 3.16, page 207]{Jac} $z$ has a unique non-constant invariant
factor
(similarity invariant), namely its characteristic
polynomial. Equivalently $z$ is conjugate to a companion matrix.
The same applies to $z^{-1}$ and to $z^{\sigma}$.

\smallskip
(i)\enskip Let $\sigma$ be as in \eqref{sigma}. If  $H\le \SU_3(q^2)$, then $z^\sigma$ is
conjugate to $z^{-1}$, whence
$b=a^q$ from \eqref{carpolz}. Conversely, if $b=a^q$, from \eqref{carpolz} and the
considerations
below it,  we get that $z^\sigma$ is conjugate  to $z^{-1}$.
By \cite[Corollary 3.2, page 326]{PTV} the group $H$
fixes a non-degenerate Hermitian form, whence our claim. 

\smallskip
(ii)\enskip If  $q$ is odd and $H\le \SO_3(q)$, then $z$ is conjugate to $z^{-1}$, whence
$b=a$.
Conversely, if $b=a$, from \eqref{carpolz} it follows that 
$z$ is conjugate  to $z^{-1}$.
By the same Corollary quoted above, $H$ fixes a non-degenerate
bilinear form, whence our claim. 

\smallskip
(iii)\enskip We argue by contradiction.  Let $J$ be a non-degenerate form
fixed by $H$ up to scalars, 
with $J^T=J^\tau$,  $\tau\in \left\{{\rm id}, \sigma\right\}$. 
From  $x^TJx^\tau=\pm J$ and $\det(-I_3)=-1$ we get (I) $x^TJx^\tau=J$. It follows that 
$J$ is non-scalar as $b\ne 0$.   Moreover
(II) $y^T J y=\omega^{\pm 1}J$, with $\omega\ne 1$ by (i), (ii) and the assumption $a=0$,
$b\ne 0$. In particular, $\omega\neq 1$ gives $p\ne 3$. Condition (II) forces
$J=\diag(1,\omega^j, \omega^{2j})\cdot (c_1y^0+c_3y+c_2y^2)$.
Condition $J^T=J^\tau$ gives $c_3=\omega^{2j}c_2^\tau = \omega^{2j\tau}c_2^\tau$.
Now
$c_2\ne 0$, as $J$ is non-scalar, hence $\omega=\omega^\tau$. 
Thus
$$J=\begin{pmatrix}
c_1&c_2&\omega^{\pm 2}c_2^\tau\\
\noalign{\smallskip}
c_2^\tau&\omega^{\pm 1}c_1&\omega^{\pm 1}c_2\\
\noalign{\smallskip}
\omega^{\pm 2}c_2&\omega^{\pm 1}c_2^\tau&\omega^{\pm 2}c_1
\end{pmatrix},\ c_1=c_1^\tau\in \F ,\ c_2\in \F^\ast.$$
Now we impose that  $D=x^TJx^\tau-J$ is zero. Assume first $c_1=0$.
From $D_{2,3}=-2\omega^{\pm 1}c_2$ it follows $p=2$. But, in this case,
$D_{1,3}=c_2b^\tau$ gives the 
contradiction $b=0$.
So we may suppose $c_1=1$. 
From $D_{2,3}=0$ we get $b=-2c_2^\tau$, whence $p\ne 2$ and $c_2=\frac{-b^\tau}{2}$.
After this substitution, $D_{1,3}=0$ gives:
$$-2\omega^{\pm 2}=\frac{b^{2\tau}}{b}=\frac{b^{2}}{b^\tau}.$$
Thus  $b^{6\tau}=b^6=-8b^3$ gives $b^3=-8$. 
By Lemma \ref{irreducibility}(ii) $H$ is reducible, a contradiction.
\end{proof}

\begin{theorem}\label{positive SL3}  
Let $x,y$ be defined as in \eqref{generators3} with $a=0$, $b\in \F_{q}^ \ast$ such
that:
\begin{itemize}
\item[\rm{(i)}]  $b\ne -2\omega^j$, $\forall\ j\in \{0,1,2\}$;
\item[\rm{(ii)}] $\F_{q}=\F_p[b^3]$;
\item[\rm{(iii)}] $b\neq \omega^j$, $j\in \{0,1,2\}$ when $p=2$.
\end{itemize}
Then $H=\SL_3(q)$.
Moreover for $q\geq 5$, there exists $b\in \F_q^\ast$ satisfying {\rm (i)} to {\rm (iii)}
above.
For $q=2,3$, taking $(a,b)=(1,2)$ in \eqref{generators3}, one gets $H=\SL_3(q)$.

In particular, the groups
$\SL_3(q)$ and $\PSL_3(q)$
are $(2,3)$-generated for all  $q\ne 4$. 
\end{theorem}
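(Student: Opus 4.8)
The plan is to run the Aschbacher classification of subgroups of $\SL_3(q)$, as given in \cite{BHR}, against the absolutely irreducible group $H$ and to show that $H$ survives in no maximal subgroup, forcing $H=\SL_3(q)$. First I would set up the hypotheses. With $a=0$ one checks directly from \eqref{generators3} that $x^2=y^3=I$ and that $ab=0\ne 1$, so that Lemmas \ref{monomial}, \ref{powers of z} and \ref{forms} are all available. Condition (i) is precisely the inequality of Lemma \ref{irreducibility}(ii) (part (i) of that lemma being automatically satisfied here, as $a=0\ne 2\omega^j$), so $H$ is absolutely irreducible; this removes class $\mathcal{C}_1$. Classes $\mathcal{C}_4$ and $\mathcal{C}_7$ are empty because $3$ is prime, so no non-trivial tensor or tensor-induced decomposition exists.

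I would then eliminate the remaining geometric classes in turn. For the imprimitive class $\mathcal{C}_2$ I quote Lemma \ref{monomial}, legitimate since $ab\ne 1$. For the field-extension class $\mathcal{C}_3$ I would use Lemma \ref{C3} in the manner it is advertised; concretely the relevant Singer-type normalizer has odd order $q^2+q+1$ and hence contains no involution, which already excludes $x$. For the extraspecial normalizer $\mathcal{C}_6$ (present when $3\mid q-1$) I apply Lemma \ref{C6} with $r=3$: it is enough that $z^6=(xy)^6$ be non-scalar, which is exactly Lemma \ref{powers of z}(i). The subfield class $\mathcal{C}_5$ is controlled by hypothesis (ii): since replacing $z$ by a scalar multiple $\omega^j z$ multiplies the trace $b$ of $z$ by $\omega^j$, the cube $b^3$ is the projectively invariant trace datum, so $\F_q=\F_p[b^3]$ shows that the projective field of definition of $H$ is exactly $\F_q$ and $H$ embeds in no $\PSL_3(q_0)$ with $q_0<q$; condition (iii) discards the residual degeneracy in characteristic $2$. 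Finally $\mathcal{C}_8$ is removed by Lemma \ref{forms}(iii): because $a=0$, the group $H$ fixes no non-degenerate hermitian or orthogonal form, and there is no symplectic form in odd dimension.

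The step I expect to be the main obstacle is class $\mathcal{S}$, the almost simple irreducible subgroups of $\SL_3(q)$: up to covers these are essentially $\PSL_2(7)$ and $\Alt(6)$, groups of bounded order whose element orders lie in a very short list. I would control them through the order of $z=xy$. By Lemma \ref{powers of z}(i) the projective order of $z$ is neither at most $4$ nor equal to $6$, so such an $\mathcal S$-subgroup could contain $H$ only if $z$ had projective order $5$ or $7$. But Lemma \ref{powers of z}(ii),(iii) show that $z^5$ or $z^7$ scalar forces $b$ to be a root of one of finitely many fixed polynomials over $\Z$, so that $b^3$ lies in a field of bounded size; this contradicts $\F_q=\F_p[b^3]$ except for finitely many small $q$, which are then settled by direct computation. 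With $\mathcal{C}_1,\dots,\mathcal{C}_8$ and $\mathcal{S}$ all excluded, \cite{BHR} forces $H\supseteq\SL_3(q)$, whence $H=\SL_3(q)$ and the projective statement for $\PSL_3(q)$ follows.

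It remains to produce a valid $b$. For $q\ge 5$ I would take $b$ to be a generator of the cyclic group $\F_q^\ast$; then $\F_p[b^3]=\F_q$ by Lemma \ref{C3} (applicable since $q\ne 4$), so (ii) holds, and there remain at most three forbidden values from (i) and at most three more from (iii) to avoid. As the number of available generators is $\varphi(q-1)$, the lower bound of Lemma \ref{Phi 3/2} shows that $\varphi(q-1)$ exceeds this count once $q$ is not too small, the finitely many remaining small $q$ being checked by hand. The values $q=2,3$ fall outside the $a=0$ construction and are handled separately by the explicit pair $(a,b)=(1,2)$ of \eqref{generators3}, while $q=4$ is genuinely excluded, consistent with its appearance as the exceptional value in Lemma \ref{C3}.
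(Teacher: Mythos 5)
Your overall strategy --- absolute irreducibility plus elimination of the Aschbacher classes $\mathcal{C}_1,\dots,\mathcal{C}_8$ and $\mathcal{S}$ --- is the same as the paper's, and most of your class-by-class exclusions are sound (your treatment of $\mathcal{C}_3$ via the odd order $3(q^2+q+1)$ of the Singer normalizer is in fact cleaner than the paper's appeal to absolute irreducibility). But your handling of class $\mathcal{S}$ has a genuine gap. Having reduced to the possibility that $z=xy$ has projective order $5$ or $7$, you retain from Lemma \ref{powers of z}(ii),(iii) only the condition that $b$ is a root of a fixed integer polynomial, and conclude that this contradicts (ii) ``except for finitely many small $q$, which are then settled by direct computation.'' This is false: what the polynomial condition bounds is the degree $[\F_p[b^3]:\F_p]$, not the size of the field, so the exceptional set is infinite. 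Concretely, if $z^5$ were scalar then $b^3$ would satisfy $u^2-4u-1=0$, which only forces $q\le p^2$; for instance, when $q=p=11$ condition (ii) is vacuous and $b=4$ (so that $b^3=9$, a root of $u^2-4u-1$ modulo $11$) satisfies all of (i)--(iii), yet your criterion cannot exclude projective order $5$ --- and such a surviving case exists for roughly every prime $p$, so no finite computation can close the argument. The correct step, which is the one the paper takes, uses the full strength of Lemma \ref{powers of z}: if $z^5$ is scalar then \emph{both} $a$ and $b$ are roots of $t^6-4t^3-1$, and $a=0$ is not a root (the constant term is $-1$); if $z^7$ is scalar then either $a\neq 0$ (impossible here) or $p=2$ and $b=\omega^j$, which is exactly what hypothesis (iii) forbids. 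Hence the projective order of $z$ is at least $8$, exceeding every element order of $\Alt(6)$ and $\PSL_2(7)$, with no exceptional $q$ at all. Note this is also where (iii) is really used; you instead attached it to the $\mathcal{C}_5$ step, where it plays no role.

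Two smaller points. First, your existence argument insists on $b$ being a generator of $\F_q^\ast$ even when $q=p$; for $q=7$ both generators $3$ and $5$ lie in the forbidden set $\{-2\omega^j\}=\{3,5,6\}$, so that construction fails there. The paper avoids this: for $q=p$ condition (ii) is automatic, so any of the at least $q-1-3\geq 1$ values avoiding (i) and (iii) works, while generators (via Lemma \ref{C3}) are needed only when $q\neq p$, in which case (ii) actually implies (i) and (iii). Second, the cases $q=2,3$ with $(a,b)=(1,2)$ are part of the statement and need an argument --- the paper checks that $\mathcal{C}_5$ and $\mathcal{S}$ are empty and rules out $\SO_3(3)\cong\Sym(4)$ using the order of $xy$ --- whereas your proposal merely asserts that they are handled separately.
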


\begin{proof} 
Let $M$ be a maximal subgroup of $\SL_3(q)$ which contains $H$. 
By Lemma \ref{irreducibility} and assumption (i) the group $H$ is absolutely
irreducible. In particular $M$ cannot
belong to $\mathcal{C}_1\cup\mathcal{C}_3$. Since $ab= 0$, by  Lemma
\ref{monomial}, the group $M$  cannot belong to $\mathcal{C}_2$. 
By point (i) of Lemma \ref{powers of z} and Lemma \ref{C6} the group $M$
cannot belong to $\mathcal{C}_6$.  
We note that $z=xy$ has trace $b$. So, by assumption (ii), $H$ is not conjugate to any
subgroup of $\SL_3(q_0)Z$, with $q_0<q$ and $Z$ the center of $\SL_3(q)$.
Thus $M$ cannot belong to $\mathcal{C}_5$.   
If $M$ is in class $\mathcal{S}$, then its
projective image  should be either $\Alt(6)$
or $\PSL_2(7)$.
But this possibility cannot arise as,
by  (iii) and Lemma \ref{powers of z}, the projective order of $z$ is $\ge 8$.
By point (iii) of Lemma \ref{forms} the group $M$ does not belong to the class
$\mathcal{C}_8$ and 
so we conclude that
$H=\SL_3(q)$.

Now we come to the existence of $b$. 
If $q=p$, the elements $b\in \F_{q}^\ast$ which do not satisfy either (i)  or
(iii) are at most $3$. Hence, for  $q=p\ge 5$, there exists at least one $b\in \F_p^ \ast$
satisfying  conditions (i) and (ii).
If $q \neq p$, it suffices to find an element $b\in \F_q^ \ast$ satisfying (ii). 
This is possible for $q\ne 4$: indeed we may take $b$ of order $q-1$ since, by Lemma
\ref{C3},
$\F_p[b^3]=\F_p[b]$.

Finally let $q=2,3$, $(a,b)=(1,2)$. The classes $\mathcal{C}_5$ and $\mathcal{S}$ are
empty. 
Also the class
$\mathcal{C}_8$ is empty for $q=2$. For $q=3$, assume that $M$ is a subgroup of
$\SO_3(3)\cong \Sym(4)$. By Lemma \ref{powers of z} this cannot happen, since the order of
$xy$ is greater than $4$. We conclude that $H=\SL_3(q)$.
\end{proof}

\begin{theorem}\label{negative results} 
For $p=2,3,5$ the groups $\PSU_3(p^2)$
are not $(2,3)$-generated.
\end{theorem}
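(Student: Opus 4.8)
The plan is to suppose, for a contradiction, that $\PSU_3(p^2)$ is $(2,3)$-generated for some $p\in\{2,3,5\}$, and to force the hypothetical generators into the normal form of Section \ref{dim3}. A generating pair lifts to $\SU_3(p^2)\le\SL_3(\F)$, and after adjusting the lifts by central elements as in Section \ref{dim3} we may take them to be the matrices $x,y$ of \eqref{generators3}. Because $\PSU_3(p^2)$ acts absolutely irreducibly on the natural module, so does $H=\langle x,y\rangle$; Lemma \ref{forms}(iii) then rules out $a=0$, and Lemma \ref{forms}(i) shows that the unitary condition is precisely $b=a^p$. Thus a putative pair is governed by a single parameter $a\in\F_{p^2}^\ast$ with $b=a^p$, and the whole problem reduces to proving that for every such $a$ the projective image $\overline H$ is a proper subgroup of $\PSU_3(p^2)$.

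Next I would determine which Aschbacher classes a maximal overgroup $\overline M\ge\overline H$ could belong to. Absolute irreducibility excludes $\mathcal{C}_1$ and $\mathcal{C}_3$, while $\mathcal{C}_4$ and $\mathcal{C}_7$ are empty in dimension $3$. Since $ab=a^{p+1}$ is the norm $N(a)$ of $a$, whenever $N(a)\ne1$ part (i) of Lemma \ref{powers of z} guarantees that $(xy)^6$ is not scalar, so Lemma \ref{C6} rules out $\mathcal{C}_6$ as well. This leaves only three possibilities: the monomial class $\mathcal{C}_2$, which by Lemma \ref{monomial} can arise only when $ab=1$, i.e. $N(a)=1$; the orthogonal subgroup $\SO_3(p)$, which by Lemma \ref{forms}(ii) arises exactly when $b=a$, i.e. $a\in\F_p$; and the class $\mathcal{S}$, whose members have projective image $\Alt(6)$, $\PSL_2(7)$ or $\Alt(7)$, all of whose elements have order at most $7$. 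So it suffices to show that every admissible $a$ falls into one of these three buckets.

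For $p=2$ this is immediate and needs no case analysis: with $\omega$ a primitive cube root of unity (a generator of $\F_4^\ast$), condition (ii) of Lemma \ref{irreducibility} is violated as soon as $b\in\{a,a\omega,a\omega^2\}=\F_4^\ast$, and $b=a^2$ always lies in $\F_4^\ast$; hence $H$ is never absolutely irreducible, and $\PSU_3(4)$ cannot be $(2,3)$-generated. For $p=3$ and $p=5$ I would carry out the finite check over $a\in\F_{p^2}^\ast$, first cutting down the number of cases by the symmetries $a\mapsto a^p$ (which replaces $z=xy$ by a conjugate of $z^{-1}$) and $a\mapsto\omega a$ (for $p=5$ one has $(\omega a)^5=\omega^2a^5$, so this sends the admissible pair $(a,a^p)$ to the conjugate pair $(\omega a,\omega^2 a^p)$). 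The values $a\in\F_p$ are disposed of by the orthogonal subgroup, the values with $N(a)=1$ by the monomial subgroup, and Lemma \ref{irreducibility} eliminates several further values as reducible; for the few remaining orbits I would compute $\chi_z(t)=t^3-bt^2+at-1$ and read off the order of $z$.

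The heart of the proof, and its main obstacle, is this last step. For the surviving values of $a$ one finds that $z$ has small order: for $p=3$ the single residual orbit gives $z$ of order $7$ (its characteristic polynomial coincides with that of a regular semisimple element whose eigenvalues are the primitive $7$th roots of unity), and the same kind of collapse must be checked for $\PSU_3(25)$. The subtlety is that possessing a $(2,3)$-pair with $xy$ of order at most $7$ does not by itself make $H$ proper, so one must confirm that such a triple actually generates an $\mathcal{S}$-subgroup -- a copy of $\PSL_2(7)$ when $p=3$, and an $\mathcal{S}$-subgroup with projective image $\Alt(7)$ when $p=5$ -- rather than the full group. I would settle this from the explicit maximal-subgroup data for $\PSU_3(9)$ and $\PSU_3(25)$ in \cite{BHR}, verifying that no $(2,3)$-generating pair with $xy$ of the computed order survives outside these $\mathcal{S}$-subgroups; for $p=3$ this recovers Wagner's result \cite{W}, and for the new case $p=5$ it is where a short explicit computation is unavoidable.
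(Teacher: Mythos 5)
Your reduction and enumeration are exactly the paper's: pass to the normal form \eqref{generators3}, use Lemma \ref{forms}(i) to get $b=a^p$, and then check the finitely many $a\in\F_{p^2}^\ast$, disposing of most values by reducibility (Lemma \ref{irreducibility}), monomiality when $a^{p+1}=1$ (Lemma \ref{monomial}), and the orthogonal case $a\in\F_p$. Your $p=2$ argument is complete and coincides with the paper's. The genuine gap is at the step you yourself single out as the heart of the matter: the surviving values of $a$ for $p=3,5$. There you propose to conclude by ``verifying from the maximal-subgroup data of \cite{BHR} that no $(2,3)$-generating pair with $xy$ of the computed order survives outside the $\mathcal{S}$-subgroups''. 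That is not a method but a restatement of what must be proved: the list of maximal subgroups of $\PSU_3(9)$ and $\PSU_3(25)$ carries no information about which explicit pairs generate which subgroups, and since $\PSU_3(9)$ contains elements of order $7$ (and $\PSU_3(25)$ elements of orders $5$ and $7$), the order of $xy$ by itself decides nothing --- as you concede.

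The paper closes precisely this gap with a presentation argument that your proposal lacks. For each surviving $a$ one computes directly that either $(xy)^5$ is scalar, or $(xy)^7$ is scalar \emph{and} $[x,y]^4=I$. In the first case $\overline{H}$ is an epimorphic image of the $(2,3,5)$ triangle group, i.e.\ of $\Alt(5)$; in the second it is an epimorphic image of $\langle x,y\mid x^2=y^3=(xy)^7=[x,y]^4=1\rangle\cong\PSL_2(7)$. In either case $|\overline{H}|\le 168$, so $\overline{H}$ cannot be $\PSU_3(9)$ (order $6048$) or $\PSU_3(25)$ (order $126000$). The decisive extra datum is the relation $[x,y]^4=I$, which your proposal never computes; it is what converts ``$xy$ has projective order $7$'' into ``$\overline{H}$ is proper'', with no appeal to the subgroup structure of the unitary groups at all (indeed the paper's proof of Theorem \ref{negative results} never invokes Aschbacher classes or \cite{BHR}). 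Two smaller corrections: for $p=5$, two of the surviving values give $(xy)^5$ scalar (the $\Alt(5)$ collapse) rather than order $7$, and Lemma \ref{powers of z}(ii)--(iii) is the tool that identifies exactly which $a$ produce these scalars; also $M_{10}\le \PSU_3(25)$ contains elements of order $8$, so your claim that all elements of the relevant $\mathcal{S}$-subgroups have order at most $7$ is not quite accurate. If you replace the appeal to maximal-subgroup data by these matrix computations plus the two presentation facts, your argument becomes the paper's proof.
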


\begin{proof}
 By what observed at the beginning of this Section,
it is enough to show that, for $p=2,3,5$,
 it is not possible to find $a,b$ in the
definition of $x$ so that $H=\SU_3(p^2)$. 
By contradiction, suppose that $a,b$ exist.

Point (i) of Lemma \ref{forms} gives $b=a^p$. Clearly $a^p\ne a$.

\textbf{Case  $q^2=25$.} Set $\F_{25}=\left\{h+k\omega \mid h,k\in \F_5\right\}$.
For $a\in \left\{\omega, -1-\omega\right\}$, $ab=1$ and $H$ is monomial by Lemma
\ref{monomial}.
For $a\in\left\{2\omega, -2-2\omega\right\}$, $H$ is reducible by point
(i) of Lemma \ref{irreducibility}.
For $a\in \left\{3\omega, -3-3\omega\right\}$, we have $z^5$ scalar, thus $H$ has 
$\Alt(5)$ as composition factor.
For $a\in \left\{\pm (3+\omega), \pm(3+2\omega), \pm(1+3\omega)\right\}$ we have $z^7$
scalar and $[x,y]^4=I$,
thus $H$ has  $\PSL_2(7)$ as composition factor.
For the remaining values, namely for 
$a\in \left\{-\omega, 1+\omega,
1+2\omega,1-\omega,2+\omega,3-\omega,-1+\omega,-1+3\omega\right\}$, the group
$H$ is reducible by point (ii) of
Lemma \ref{irreducibility}.

\textbf{Case $q^2=9$.} Set $\F_9=\F_3(\alpha)$, $\alpha^2=\alpha +1$.
If $a\in \{\alpha , 2\alpha+1\}$,  then $H$ is reducible.
If $a\in \{\alpha+1 , \alpha^3+1\}$, then  $ab=a^4=1$ and $H$ is monomial. 
If $a\in \{-\alpha , -\alpha^3\}$, then $z=xy$ has order $7$ and $[x,y]$ has order 4.
It follows that $H\simeq \PSL_2(7)$.

\textbf{Case $q^2=4$.} Then $a=\omega^{\pm 1}$  and
$H$ is reducible by point (ii) of Lemma \ref{irreducibility}.
\end{proof}

\begin{theorem}\label{positive results3}  
Suppose $q^2\ne 4,9,25$. Define $x,y$ as in \eqref{generators3}
with $a\in \F_{q^2}^{\ast}$, $b=a^q$, and the further conditions:
\begin{itemize}
\item[\rm{(i)}] $a^q+a\omega^j+2\omega^{2j}\ne 0$, $j\in \{0,1,2\}$;
\item[\rm{(ii)}]  $a^{q+1}-1\ne 0$;
\item[\rm{(iii)}] $\F_{q^2}=\F_p[a^3]$;
\item[\rm{(iv)}] if $p\ne 2$, then  $a^6-4a^3-1\ne 0$;
\item[\rm{(v)}] $a^{15} - 16a^{12} + 59a^9 - 67a^6 - 37a^3 + 8\ne 0$.
\end{itemize}

Then $H=\SU_3(q^2)$. Moreover there exists $a\in \F_{q^2}$ satisfying {\rm (i)} to {\rm
(v)} above.
In particular, $\SU_3(q^2)$ and $\PSU_3(q^2)$
are $(2,3)$-generated for all  $q^2\ne 4,9,25$. 
\end{theorem}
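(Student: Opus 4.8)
The plan is to mirror the structure of the proof of Theorem \ref{positive SL3}, but now working inside $\SL_3(q^2)$ and using the conditions (i)--(v) to rule out each Aschbacher class in turn. First I would let $M$ be a maximal subgroup of $\SU_3(q^2)$ containing $H$ and observe that by Lemma \ref{forms}(i) the choice $b=a^q$ guarantees $H\le \SU_3(q^2)$, so the ambient group is correct. Condition (i) is exactly the negation of the reducibility criterion in Lemma \ref{irreducibility}(ii), while condition (ii) gives $ab=a^{q+1}\ne 1$; together these make $H$ absolutely irreducible and put us in a position to apply Lemmas \ref{monomial}, \ref{powers of z} and \ref{forms}. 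Thus $M\notin\mathcal{C}_1$, and since $ab\ne 1$, Lemma \ref{monomial} rules out $\mathcal{C}_2$.

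Next I would dispose of the remaining geometric classes. For $\mathcal{C}_5$ (subfield subgroups) the relevant invariant is again the trace of $z=xy$, which equals $b=a^q$; condition (iii), namely $\F_{q^2}=\F_p[a^3]$, forces $z$ (equivalently its trace, up to the cube coming from scalars) not to lie in any proper subfield, so $H$ is not conjugate into $\SU_3(q_0^2)Z$ with $q_0<q$. For $\mathcal{C}_6$ I would invoke Lemma \ref{C6}: it suffices that $z^6=(xy)^6$ be non-scalar, and this is supplied by Lemma \ref{powers of z}(i). For the class $\mathcal{S}$, the only projective possibilities in dimension $3$ are $\Alt(6)$ and $\PSL_2(7)$ (and possibly $\PSU_3$ over a tiny field); here conditions (iv) and (v) enter decisively. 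Condition (iv), $a^6-4a^3-1\ne 0$ for $p\ne 2$, prevents $z^5$ from being scalar via Lemma \ref{powers of z}(ii), excluding a composition factor $\Alt(5)\le \Alt(6)$; condition (v), the non-vanishing of the degree-$15$ resultant $R$, prevents $z^7$ from being scalar via Lemma \ref{powers of z}(iii), excluding $\PSL_2(7)$. Finally, $\mathcal{C}_8$ is automatically excluded because the Hermitian form is the defining form of $\SU_3$, so there is no larger classical group in this class; and $\mathcal{C}_3$ (field-extension subgroups) is handled by the absolute irreducibility of $H$ together with $z$ having irreducible characteristic polynomial over the appropriate field, as in the linear case. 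Combining these exclusions yields $H=\SU_3(q^2)$.

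The second half of the statement is the existence of a suitable $a$, and I expect this to be the main obstacle, since it is a genuine counting argument rather than a structural one. The strategy is to bound the number of ``bad'' elements $a\in\F_{q^2}^\ast$ that violate any one of (i)--(v): condition (i) excludes at most $9$ values (three choices of $j$, each giving a linear equation $a^q=-a\omega^j-2\omega^{2j}$ whose number of solutions is controlled since $a\mapsto a^q$ is a bijection); (ii) excludes the $q+1$ elements of norm $1$; (iv) excludes at most $6$ roots; (v) excludes at most $15$ roots of $R$. The delicate condition is (iii), $\F_{q^2}=\F_p[a^3]$: I would argue, as in the proof of Theorem \ref{positive SL3} via Lemma \ref{C3}, that taking $a$ of order $q^2-1$ (a generator of $\F_{q^2}^\ast$) gives $\F_p[a^3]=\F_p[a]=\F_{q^2}$ whenever the prime $s=3$ does not fall into the exceptional case of Lemma \ref{C3}, which happens precisely for $q^2\ne 4$. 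I would then count generators of $\F_{q^2}^\ast$, of which there are $\varphi(q^2-1)$, and show this exceeds the total number of excluded elements bounded above. This is where Corollary \ref{pellegrini} and Lemma \ref{Phi 3/2} do the work: for $q$ large enough $\varphi(q^2-1)$ dominates the $O(q)$ excluded values, leaving a valid $a$; the finitely many small $q$ with $q^2\ne 4,9,25$ are then checked by direct computation. The passage from $\SU_3(q^2)$ to $\PSU_3(q^2)$ is immediate, since the center consists of scalars and $(2,3)$-generation descends to the quotient.
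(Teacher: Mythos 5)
Your exclusion of the Aschbacher classes follows the paper's proof essentially step for step (same lemmas, same invariants), and your existence strategy---restrict to generators of $\F_{q^2}^\ast$, which handle (ii) and (iii) via Lemma \ref{C3}, then count bad elements and invoke Corollary \ref{pellegrini} for large $q$ with explicit checks for small $q$---is also the paper's. But your count of the elements violating (i) is wrong. You claim each equation $a^q+a\omega^j+2\omega^{2j}=0$ has a bounded number (three) of solutions ``since $a\mapsto a^q$ is a bijection''; in fact the map $a\mapsto a^q+\omega^j a$ is $\F_q$-linear on $\F_{q^2}$, a $2$-dimensional $\F_q$-space, so its kernel can have size $q$, and then the affine equation has exactly $q$ solutions. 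Concretely, for $q$ odd and $j=0$, the substitution $a=b-1$ turns $a^q+a+2=0$ into $b^q+b=0$, which has exactly $q$ solutions in $\F_{q^2}$. Hence condition (i) can exclude up to $3q$ elements, not $9$: this is precisely the source of the bound $3q+6+15=3q+21$ in the paper's proof and of the threshold $3n+21$ in Corollary \ref{pellegrini}. Your argument survives only by accident: since you separately carry the $q+1$ elements from (ii), your total stays $O(q)$, and Corollary \ref{pellegrini} (valid for $q\ge 14$, hence for all prime powers $q\ge 16$) dominates the correct bound as well; but the step as written is false and must be replaced by this $\F_q$-linear count.

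A second, smaller gap: you deduce absolute irreducibility from (i) and (ii), matching (i) against Lemma \ref{irreducibility}(ii) only. This leaves case (i) of Lemma \ref{irreducibility} unaddressed, and condition (ii) does not cover it: for $p\ne 2,3$ the pair $(a,b)=(2,2)$ satisfies $b=a^q$ and $ab=4\ne 1$, yet makes $H$ reducible. The conclusion you want is still true---one can check that for $(a,b)=(2\omega^j,2\omega^{2j})$ condition (i) fails at the index $j+1$, because $a^q+a\omega^{j+1}+2\omega^{2j+2}=2\omega^{2j}(1+\omega+\omega^2)=0$---but some such verification is needed. The paper instead invokes condition (iii) for this purpose: $a=2\omega^j$ forces $a^3=8\in\F_p$, contradicting $\F_p[a^3]=\F_{q^2}$.
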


\begin{proof}
By point (i) of Lemma  \ref{forms} and the assumption $b=a^q$ we have that
$H\le \SU_3(q^2)$. 
Let $M$ be a maximal subgroup of $\SU_3(q^2)$ which contains $H$. 
By Lemma \ref{irreducibility} and conditions (i) and (iii), $H$ is absolutely
irreducible.  In particular $M$ cannot
belong to $\mathcal{C}_1\cup\mathcal{C}_3$. By (ii) we have $ab\ne 1$ so
that $M$ cannot belong to $\mathcal{C}_2$  by Lemma \ref{monomial}.
Since $z^6$ is not scalar, by Lemma \ref{powers of z}, we may apply Lemma
 \ref{C6} with $r=3$ to deduce that $M$ does belong to $\mathcal{C}_6$.
From $b=a^q$ it follows $\F_p[b^3]=\F_p[a^3]=\F_{q^2}$. As $z$ has trace $b$, it cannot be
conjugate 
to any element of $\SL_3(q_0)Z$, with $q_0< q^2$ and $Z$ the center of $\SU_3(q^ 2)$.
Thus $M\not\in \mathcal{C}_5$. 
Finally,  if $M$ is in class $\mathcal{S}$,
then its
projective image  should be either $\Alt(6)$
or $\PSL_2(7)$, since we are excluding $q=5$.
But this possibility cannot arise as,
by  (iv), (v) and Lemma \ref{powers of z} , the projective order of $z$ is $\ge 8$.
We conclude that $H=\SU_3(q^2)$.

As to the existence of some $a$ satisfying all the assumptions, we first note that 
the elements of $\F_{q^2}$ which do not satisfy either (i) or (iv) or (v) are
at most  $3q+6+15=3q+21$. On the other hand, any $a\in \F_{q^2}^ \ast$ of order $q^2-1$,  
satisfies (ii) and also (iii) by Lemma \ref{C3}. 

Assume first $q\ge 16$. By Lemma \ref{pellegrini} there are at least $3q+22$ elements
$a\in\F_{q^2}^\ast$ having order $q^2-1$, and our claim is true.

For $q<16$ we may take $a$
of order $q^2-1$ and minimal polynomial specified in Table B. Indeed this minimal
polynomial is coprime with
each of $t^q+t\omega^j+2\omega^{2j}$, for all $j$,  
$t^6-4t^3-1$ and $t^{15} - 16t^{12} + 59t^9 - 67t^6 - 37t^3 + 8$.

\begin{center}
Table B\\\medskip
\begin{tabular}{cc|cc}
$q$ & Minimal Polynomial&$q$&Minimal Polynomial \\ \hline\\[-10pt]
$4 $& $t^4+t+1$&$9$& $t^4+t^3+t^2+2t+2$\\
$7$& $t^2+6t+3$&$11$& $t^2+7t+2$\\
$8$& $t^6 + t^4 + t^3 + t + 1$&$13$&$t^2-t+2$.
\end{tabular}
\end{center}
\end{proof}

\section{Dimension 5}\label{dim5}

In \cite[Lemma 2.1, page 124]{V} M. Vsemirnov parametrizes, up to conjugation,
all the $(2,3)$ irreducible pairs $X,Y\in \SL_5(\F)$, for $p\ne 3$.
His purpose is to show that $\PSU_5(4)$ is not $(2,3)$-generated.
In his parametrization, taking  $a=-1$, $d=0$, 
$b,c\in \F$, we obtain 

\begin{equation}\label{generators5}
x=\begin{pmatrix}
0&1&0&0&c\\
1&0&0&0&-c\\
0&0&0&1&0\\
0&0&1&0&0\\
0&0&0&0&1
\end{pmatrix},\, c\neq 0,\quad 
y=\begin{pmatrix}
1&0&-1&0&b\\
0&0&-1&0&0\\
0&1&-1&0&0\\
0&0&0&0&-1\\
0&0&0&1&-1
\end{pmatrix}.
\end{equation}

\begin{lemma}\label{irred5}
Let $x,y$ be as in $\eqref{generators5}$.
Then $H=\langle x,y\rangle$ is absolutely irreducible if, and only if, the following
conditions hold:
\begin{itemize}
\item[\rm{(i)}] $b^2 + 3bc - b + 3c^2 - 3c + 1 \ne 0$, i.e., $b\ne (\omega^{\pm
1}-1)c-\omega^{\pm 1}$;
\item[\rm{(ii)}] $b^2+10b+16c+9\ne 0$, i.e., $(b,c)\ne (4\gamma-1, -\gamma^2 -2\gamma)$,
$\forall\ \gamma\in \F$.
\end{itemize}
\end{lemma}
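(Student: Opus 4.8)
The plan is to prove directly that $H$ has no proper nonzero $\F$-invariant subspace exactly when (i) and (ii) hold, since over the algebraically closed field $\F$ absolute irreducibility is equivalent to the absence of such a subspace. I would organise the search by the eigenspace decomposition of the order-$3$ generator $y$: for $p\ne 3$ the element $y$ is semisimple with eigenvalues $1,\omega,\omega^2$, and a short computation on the canonical basis shows that its $1$-eigenspace is the line $\langle e_1\rangle$ while the $\omega$- and $\omega^2$-eigenspaces are two-dimensional. Any $H$-invariant subspace $U$ is in particular $y$-invariant, hence the direct sum of its intersections with these three eigenspaces; thus $U$ is assembled from $y$-eigenlines and/or the full two-dimensional eigenspaces, and it remains only to impose $x$-invariance on each admissible configuration. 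The proper nonzero dimensions are $1,2,3,4$; by the duality ``$H$ fixes a $k$-space if and only if $H^T$ fixes a $(5-k)$-space'', dimensions $3,4$ are reduced to the dimension-$2,1$ analysis applied to the transposed pair $x^T,y^T$, so it suffices to exclude invariant subspaces of dimension $1$ and $2$ for both $H$ and $H^T$.

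For the one-dimensional subspaces I would compute the common eigenvectors of $x$ and $y$. Such a vector lies in an eigenspace of $x$ (eigenvalues $\pm 1$) and in one of $y$; the line $\langle e_1\rangle$ is excluded since $xe_1=e_2$, so a common eigenvector can only arise as a nonzero vector of $E^y_\omega\cap E^x_{\pm}$ or of its $\omega^2$-analogue. Writing a general element of the two-dimensional space $E^y_\omega$ and requiring it to be an $x$-eigenvector yields a single scalar equation in $b,c$; its $\omega$- and $\omega^2$-branches combine into the quadratic $b^2+3bc-b+3c^2-3c+1$, whose roots are exactly $b=(\omega^{\pm1}-1)c-\omega^{\pm1}$, the two branches arising from $\sqrt{-3}=\omega-\omega^2$. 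This is precisely condition (i).

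For the two-dimensional subspaces I would enumerate the $y$-invariant $2$-spaces: the full eigenspaces $E^y_\omega$ and $E^y_{\omega^2}$; the sums $\ell_\omega\oplus\ell_{\omega^2}$ of two eigenlines; and the one-parameter families $\langle e_1\rangle\oplus\ell$, where $\ell$ is a line inside $E^y_\omega$ or $E^y_{\omega^2}$. Parametrising such a line $\ell$ by a scalar $\gamma$ and imposing $x$-invariance forces the relation $(b,c)=(4\gamma-1,-\gamma^2-2\gamma)$, equivalently $b^2+10b+16c+9=0$, while the remaining rigid configurations give no new solutions. This is condition (ii). Running the same eigenspace enumeration for $x^T,y^T$ (equivalently, the dual dimensions $4$ and $3$) reproduces the very same two polynomials, so no further constraint appears, and assembling the four cases gives: $H$ is reducible if and only if (i) or (ii) fails.

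The main obstacle I anticipate is the two-dimensional case: one must list all $y$-invariant $2$-spaces without omission, carry the free parameter $\gamma$ cleanly through the $x$-invariance equations, and check that the parametrised family collapses to exactly the stated curve. A related point that needs verification is that the transposed (dual) computations genuinely reproduce the polynomials of (i) and (ii) rather than introducing extra factors. Finally, the degenerate characteristics require separate treatment: for $p=3$ the generator $y$ is no longer semisimple (and $\omega=1$), while for $p=2$ the involution $x$ is unipotent, so in these cases the eigenspace decomposition must be replaced by the corresponding fixed-space or Jordan analysis, after which one confirms that conditions (i) and (ii) specialise correctly.
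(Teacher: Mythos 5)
Your overall strategy (enumerate the $y$-invariant subspaces via the eigenspace decomposition of $y$, then impose $x$-invariance, handling dimensions $3,4$ by duality) is a legitimate alternative to the paper's argument, but as written your plan contains a concrete error that would derail it. You attribute condition (ii) to the family $\langle e_1\rangle\oplus\ell$ with $\ell$ an eigenline of $y$; in fact no proper $H$-invariant subspace can contain $e_1$ at all, because the $H$-orbit of $e_1$ spans $V$: $xe_1=e_2$, $yxe_1=e_3$, $xyxe_1=e_4$, $(yx)^2e_1=e_5$ (this is the paper's opening observation). One can also see it directly: $x$-invariance of $\langle e_1\rangle\oplus\ell$ would force $e_2-\alpha e_1\in\ell$ for some $\alpha$, but $y(e_2-\alpha e_1)=e_3-\alpha e_1$ is never proportional to $e_2-\alpha e_1$, so $e_2-\alpha e_1$ is never a $y$-eigenvector. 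The true source of condition (ii) is exactly the case you dismiss as ``the remaining rigid configurations give no new solutions'': the planes $\ell_\omega\oplus\ell_{\omega^2}$, which are not rigid (each eigenline varies in a two-dimensional eigenspace, a two-parameter family). Indeed the paper's witness for the failure of (ii) is $w=(\gamma,-\gamma,1,-1,0)^T$ with $(b,c)=(4\gamma-1,-\gamma^2-2\gamma)$: one checks $xw=-w$ and $(I+y+y^2)w=0$, so $W=\langle w,yw\rangle$ is spanned by an $\omega$- and an $\omega^2$-eigenline of $y$ and contains no multiple of $e_1$. Executed as stated, your enumeration would miss condition (ii) entirely.

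Two further gaps remain even after fixing this. First, the duality step is asserted, not proved: it is not automatic that the transposed analysis ``reproduces the very same two polynomials'' — in the paper's own dimension-$3$ lemma, $H$ and $H^T$ acquire invariant lines under \emph{different} polynomial conditions, so this needs genuine verification. Second, the characteristics $p=2$ and $p=3$ cannot be deferred as edge cases: Theorem \ref{positive SL5} requires $q$ a power of $2$ or $3$, and for $p=3$ your whole eigenspace decomposition of $y$ collapses ($y$ is not semisimple and $\omega=1$). By contrast, the paper's proof is uniform in both the dimension of the invariant subspace and the characteristic: it rests on the two observations that $e_1\notin W$ and that $(I+y+y^2)V=\langle e_1\rangle$, so every $w\in W$ satisfies $(I+y+y^2)w=0$; it then splits according to whether $W$ lies in the fixed space $V_1^x$ of $x$ (that case is governed by condition (i)) or not (governed by condition (ii)), with only minor sub-cases for $p=2$. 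If you want to keep your route, you must redo the two-dimensional case around $\ell_\omega\oplus\ell_{\omega^2}$, carry out the dual computations, and supply separate arguments for $p\in\{2,3\}$.
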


\begin{proof}
By Lemma 2.2 of \cite{V}, if $b^2 + 3bc - b + 3c^2 - 3c + 1 = 0$ the group $H$ is
reducible
over $\F$. So condition (i) is necessary. 
If $(b,c)=(4\gamma-1,-\gamma^2 -2\gamma)$ for some $\gamma\in \F$,  there is a
$2$-dimensional  $H$-invariant space, namely $\left\langle w, yw\right\rangle $,
where $w=(\gamma , -\gamma, 1 , -1, 0)^T$. 
So condition (ii) is also necessary.

Now let $W$ be a proper $H$-invariant subspace of $V=\F^ 5$. Denote by 
$V_1^x$, the eigenspace of $x$ relative to $1$. Notice that
$$ xe_1=e_2,\quad yxe_1=e_3,\quad xyxe_1=e_4,\quad (yx)^ 2 e_1=e_5.$$
Thus $e_1\not\in W$, since $W\neq V$. A direct calculation shows that $(I+y+y^2)V=\langle
e_1\rangle$.
It follows that every element $w\in W$ satisfies the condition 
\begin{equation}\label{1+y+y^2}
w+yw+y^ 2w=0. 
\end{equation}
{\bf Case 1} $W$ is not contained in $V_1^x$. 
So there exists a non-zero vector $u\in W$ such that $w=u-xu\ne 0$.
Calculation of $u-xu$ for a generic $u\in \F^5$ gives $w=\left(x_1,
-x_1,x_3,-x_3,0\right)^T$,
for some $x_1,x_3\in \F$.
The vector $w$ satisfies condition \eqref{1+y+y^2} if, and only if, $4x_1-(b+1)x_3=0$.
If $p\neq 2$, then we set $x_1=\frac{b+1}{4}x_3$.
Now, also $xyw$ belongs to $W$ and so it  must satisfy condition \eqref{1+y+y^2}.
By condition (ii), we get the contradiction $w=0$.
If $p=2$, we get $(b+1)x_3=0$. Condition (ii) implies $x_3=0$. In this case condition
\eqref{1+y+y^2}, applied to 
$xyw$, gives again $w=0$.

\smallskip
{\bf Case 2} $W$ is contained in $V_1^x$. Let $0\ne w=(cx_1,cx_2,x_3,x_3,x_1-x_2)\in W$.
From $xyw=yw$ we get $x_3=x_1+(c-1)x_2$ whence $x_1(b+c)-x_2(b+c^ 2)=0$,
eliminating $x_3$.
If $b+c\neq 0$, we set $x_1=\frac{b+c^ 2}{b+c}x_2$. Using condition (i) and the assumption
 $c\ne 0$,  we see
that  $w$ satisfies   \eqref{1+y+y^2} only if it is zero, a contradiction.
Finally assume $b= -c$. Then $xyw=yw$ gives $x_1=x_3$ and $(c-1)x_2=0$. Moreover
$xy^2w=y^2w$
gives $(c-1)x_1=0$. From $w\ne 0$ we conclude  $c=1$, $b=-1$, excluded by (i).
\end{proof}

\begin{lemma}\label{monomial5} 
If $H$ is absolutely irreducible, then it is not monomial.
\end{lemma}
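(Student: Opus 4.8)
The plan is to show that the absolutely irreducible group $H = \langle x, y\rangle$ of \eqref{generators5} cannot stabilize a decomposition of $V = \F^5$ into $1$-dimensional subspaces permuted by $H$. Since the monomial group is the stabilizer of such a decomposition $V = \langle v_1\rangle \oplus \cdots \oplus \langle v_5\rangle$, with each group element permuting the lines $\langle v_i\rangle$, being monomial (up to conjugacy) means there is a basis $B = \{v_1, \dots, v_5\}$ on which both $x$ and $y$ act monomially. I would assume for contradiction that such a $B$ exists and derive constraints that contradict either the irreducibility conditions of Lemma \ref{irred5} or the hypothesis $c \neq 0$.

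The key structural observation is that the permutation action of $H$ on the five lines $\langle v_i\rangle$ gives a homomorphism $H \to \Sym(5)$, whose image is a transitive subgroup (transitivity follows from irreducibility, since any block of imprimitivity would yield a proper invariant subspace). The images $\bar x, \bar y$ of the generators have orders dividing $2$ and $3$ respectively. First I would analyze the cycle type of $\bar y$: an element of order $1$ or $3$ in $\Sym(5)$ is either trivial or a single $3$-cycle fixing two points. Then, on the block fixed pointwise by $\bar y$, the matrix $y$ acts diagonally; but $y$ as given in \eqref{generators5} has eigenvalues that are cube roots of unity (its characteristic polynomial can be computed, and $y^3$ relates to scalars), so the fixed lines would have to be genuine eigenlines of $y$. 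I would similarly pin down $\bar x$ as a product of at most two transpositions, and track which lines $x$ fixes versus swaps, using $x^2 = I$ together with $\det x = 1$ to constrain the scalars by which $x$ acts on its fixed lines (exactly as in the proof of Lemma \ref{monomial}).

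The decisive step is to combine these permutation constraints with the explicit eigenvector data. On any line $\langle v_i\rangle$ fixed by $x$, we have $xv_i = \pm v_i$, and on any line fixed by $y$ we have $yv_i = \zeta v_i$ with $\zeta^3 = 1$; the determinant conditions force the signs and cube roots to multiply to $1$ across each orbit. Tracing an explicit candidate basis through the transitive action — using that $e_1 \notin W$ and the relation $(I + y + y^2)V = \langle e_1\rangle$ from the proof of Lemma \ref{irred5} — I expect to reach a situation where the shared eigenline structure of $x$ and $y$ forces $c = 0$ or forces one of the forbidden relations (i) or (ii) of Lemma \ref{irred5} to hold, giving the contradiction.

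\textbf{The main obstacle} will be organizing the case analysis over the possible cycle types of the pair $(\bar x, \bar y)$ in the transitive subgroups of $\Sym(5)$ generated by an involution and a $3$-cycle. There are only a handful of such groups (essentially $\Alt(5)$, $\Sym(5)$, and the Frobenius group of order $20$ is not $(2,3)$-generated, so effectively one reduces to the alternating/symmetric cases), but matching each permutation pattern against the rigid eigenvector constraints of the given $x$ and $y$ requires care to ensure no configuration slips through. I anticipate that in every admissible cycle type the requirement that $x$ and $y$ share a common eigenline on their fixed points is incompatible with the off-diagonal entries of \eqref{generators5} unless $c = 0$, which is exactly the excluded case.
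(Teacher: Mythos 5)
Your overall strategy coincides with the paper's: assume $H$ acts monomially on a basis, observe that irreducibility forces the induced action on the five lines to be transitive (so that $y$ induces a $3$-cycle and $x$ a product of two $2$-cycles), use $x^2=I$ and $\det(x)=1$ to get $xv=v$ on the line fixed by $x$, and then confront the permutation structure with the explicit eigenvector data. The problem is that your proposal stops exactly where the proof actually takes place. The paper writes down the basis explicitly as $\{v,yv,y^2v,xyv,xy^2v\}$ with $v=(cx_1,cx_2,x_3,x_3,x_1-x_2)^T$ a fixed vector of $x$, imposes $y(xyv)=\omega^{j}(xyv)$ and $y(xy^2v)=\omega^{-j}(xy^2v)$ with $j\in\{1,2\}$, and solves coordinate by coordinate: coordinates $4,5$ force $x_3=-\omega^{-j}cx_2$ and $x_1=(2\omega^jc+2c+1)x_2$; coordinates $2,3$ then force $(b,c)=\left(-1,\tfrac{-\omega^j+1}{3}\right)$; finally, for $p=2$ this contradicts Lemma \ref{irred5}(ii), while for $p$ odd the first coordinate of the second vector equals $-2\neq 0$, an outright inconsistency. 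You only \emph{anticipate} that such a computation "forces $c=0$ or one of the forbidden relations", and this anticipation is in fact wrong in shape: $c=0$ never emerges, and for odd $p$ the contradiction is that the linear system has no solution at all, not a violation of Lemma \ref{irred5}. A claimed proof whose terminal contradiction is left as an expectation — and an incorrectly guessed one — is not yet a proof.

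Two further ingredients your sketch would need to make the computation terminate. First, you must pin down \emph{which} cube roots of unity occur on the two lines fixed by the $3$-cycle: your determinant argument only gives $\zeta_1\zeta_2=1$, which still allows $\zeta_1=\zeta_2=1$. The paper excludes this by requiring the monomial $y$ to have the same similarity invariants as the matrix $y$ of \eqref{generators5}; since $\tr y=-1$, the eigenvalue $1$ has multiplicity one when $p\neq3$, so the fixed lines must carry $\omega^{j},\omega^{-j}$ with $j\in\{1,2\}$ — this is what makes the conditions \eqref{fixy} strong enough. Second, $p=3$ needs a separate argument (there $\omega=1$): the paper notes that a monomial $3$-cycle action would give $y$ a fixed space of dimension $3$, whereas the actual $y$ has $1$-eigenspace of dimension $2$. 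Finally, two smaller points: transitivity follows because an \emph{orbit} of lines spans a proper invariant subspace (blocks of imprimitivity do not yield invariant subspaces, contrary to your parenthetical), and your proposed case analysis over the transitive subgroups of $\Sym(5)$ is unnecessary — once the cycle types of the induced permutations are fixed, the subgroup they generate never enters the argument.
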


\begin{proof}
Assume that $H$ acts monomially on a basis $B=\left\{v, \dots\right\}$. 
By the irreducibility of $H$, 
the permutations induced by $x,y$ must generate a transitive
subgroup of $\Sym(5)$. In particular  $x$ must act as the product of two 2-cycles, 
$y$ as a 3-cycle. This gives $p\ne 3$ as, for $p=3$, the eigenspace of $y$ relative to $1$
has dimension $2$. We may suppose that $xv=\pm v$.
From  $x^2=1$ and $\det(x)=1$, we get $xv=v$, hence
$ v=(cx_1,cx_2,x_3,x_3,x_1-x_2)^T$.
It follows that:
$$B=\left\{v,yv,y^2v,xyv,xy^2v\right\}.$$
So $B$ must be independent and, for some $j\in\left\{1,2\right\}$,
both vectors
\begin{equation}\label{fixy}
y(xyv)-\omega^{j} (xyv),\quad y(xy^2v)-\omega^{-j} (xy^2v),\quad \omega\ne 1 
\end{equation}
have to be zero, since $y$ must have the same similarity invariants of
\eqref{generators5}.
Coordinates $4$ and $5$ of the vectors in \eqref{fixy} are $0$ only if 
$$x_3= -\omega^{-j} cx_2,\quad x_1= (2\omega^jc +2c+1)x_2.$$
After these substitutions, putting $x_2=1$, coordinates 2 and 3 of the two vectors are $0$
only if
$(b,c)=\left(-1, \frac{-\omega^j+ 1}{3}\right)$.  For $p=2$, $H$ is reducible 
by (ii) of Lemma \ref{irred5}.  For $p$ odd,  coordinate $1$ of 
the second vector in \eqref{fixy}  is $-2$, a contradiction. 
\end{proof}

\begin{lemma}\label{forms2}
Suppose $H$ absolutely irreducible.
\begin{itemize}
\item[\rm{(i)}] if $q$ is odd and $H\le\SL_5(q)$, then $H\le \SO_5(q)$
if, and only if, $b=-1$;
\item[\rm{(ii)}] if $H\le\SL_5(q^2)$, then $H\le \SU_5(q^2)$
if, and only if, $b=c^q-c-1$.
\end{itemize}
\end{lemma}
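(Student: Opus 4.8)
The statement characterizes when the absolutely irreducible group $H=\langle x,y\rangle$ of \eqref{generators5} preserves a nondegenerate symmetric bilinear form (orthogonal case, $q$ odd) or a nondegenerate Hermitian form (unitary case). The natural strategy mirrors exactly the proof of Lemma \ref{forms} in dimension $3$: a nondegenerate invariant form (up to scalars) exists precisely when the module $V$ is self-dual (resp. dual to its $\sigma$-twist), and since $H$ is absolutely irreducible, such a form, when it exists, is unique up to scalar. The cleanest bridge between ``self-dual module'' and an explicit polynomial condition on $b,c$ is to compare the similarity invariants of $z=xy$ with those of $z^{-1}$ (orthogonal) or $z^{-\sigma}$ (unitary), as was done via the companion-matrix argument and Scott's formula in the dimension-$3$ case. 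So the first step I would carry out is to verify that $z=xy$ is regular, i.e. that its centralizer has minimal dimension $5$, so that $z$ is conjugate to the companion matrix of its characteristic polynomial $\chi_z(t)$ and is determined up to conjugacy by $\chi_z$ alone.

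\smallskip
\textbf{Key steps.} First I would compute $z=xy$ explicitly from \eqref{generators5} and write down $\chi_z(t)$, together with $\chi_{z^{-1}}(t)$ (whose coefficients are those of $\chi_z$ read in reverse, since $\det z=1$) and $\chi_{z^\sigma}(t)$ (obtained by applying $\sigma$ to the coefficients of $\chi_z$). Second, invoking Scott's formula as in Lemma \ref{forms}: the centralizers of $x$, $y$ have dimensions that, added to $\dim C(z)$, exceed $\dim\SL_5+$ a bounded constant only if $\dim C(z)$ is forced to its minimum $5$; hence by \cite[Theorem 3.16]{Jac} each of $z$, $z^{-1}$, $z^\sigma$ is conjugate to a companion matrix, so is determined up to conjugacy by its characteristic polynomial. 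Third, for part (i): $H\le\SO_5(q)$ forces $z$ conjugate to $z^{-1}$, i.e. $\chi_z=\chi_{z^{-1}}$; I would impose equality of the coefficients and solve, expecting this to collapse to the single condition $b=-1$. Conversely, $\chi_z=\chi_{z^{-1}}$ makes $z$ self-dual, and then by the cited \cite[Corollary 3.2]{PTV} absolute irreducibility yields an invariant nondegenerate bilinear form; since $q$ is odd and the form is essentially unique, it can be taken symmetric, giving $H\le\SO_5(q)$. Fourth, for part (ii): entirely parallel, with $z^{-1}$ replaced by $z^\sigma$, yielding the condition $\chi_{z^\sigma}=\chi_{z^{-1}}$, which I expect to reduce to $b=c^q-c-1$; the converse again uses \cite[Corollary 3.2]{PTV} to produce a nondegenerate Hermitian form.

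\smallskip
\textbf{Main obstacle.} The conceptual frame is routine once regularity of $z$ is established, so the real work is twofold. The first and genuine technical point is checking that $z=xy$ really is regular in dimension $5$: in the dimension-$3$ case the dimension count $5+3+3=9+2$ was tight enough that Scott's formula pinned $\dim C(z)=3$ immediately, and I would need the analogous count for $n=5$ (using $\dim C(x)$, $\dim C(y)$ from the specific forms in \eqref{generators5}) to force $\dim C(z)=5$; if the bound were not tight one might have to argue regularity of $z$ directly, e.g. by exhibiting a cyclic vector or checking that $\chi_z$ equals the minimal polynomial for generic $b,c$. The second, purely computational, obstacle is extracting the clean conditions $b=-1$ and $b=c^q-c-1$ from the coefficient comparisons; this requires carrying out the $5\times5$ matrix multiplication for $z$ and matching several coefficients, but the reverse-coefficient symmetry of $\chi_{z^{-1}}$ and the twist structure of $\chi_{z^\sigma}$ should make the system degenerate so that a single relation survives. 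I would verify the converse directions by the same \cite[Corollary 3.2, page 326]{PTV} invariant-form criterion already used in Lemma \ref{forms}, so no new machinery is needed there.
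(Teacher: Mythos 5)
Your proposal follows essentially the same route as the paper's proof: the centralizers of $x$ and $y$ have dimensions $13$ and $9$, so Scott's formula gives the tight count $13+9+5=25+2$, forcing $z$, $z^{-1}$ and $z^{\tau}$ to be conjugate to companion matrices, after which comparison of characteristic polynomials and \cite[Corollary 3.2, page 326]{PTV} yield both implications exactly as you outline. The dimension count is indeed tight, so the fallback you anticipate (proving regularity of $z$ directly) is not needed.
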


\begin{proof}
The characteristic polynomials of $z=xy$ and $z^{-1}$ are respectively 
\begin{equation}\label{char5}
t^5 + t^4 + ct^3 + (-b - c - 1)t^2 -t-1, \quad 
t^5 + t^4 + (b + c + 1)t^3 -ct^2 -t -1. 
\end{equation}
The centralizers of $x$, $y$ and $z$ have respective dimensions $13$, $9$ and $\geq 5$.
From  $13+9+5=25+2$ and $H$ absolutely irreducible, by Scott's formula we get that 
the centralizer of $z$ must have dimension $5$ (e.g. see \cite{PTV}).
Thus  $z$ has a unique non-constant invariant factor,
namely its characteristic polynomial. Equivalently $z$ is conjugate to a companion
matrix.
The same applies to $z^{-1}$ and to $z^{\tau}$,
where $\tau\in\left\{\id, \sigma\right\}$. 
Thus, if $b=c^\tau-c-1$, comparing the characteristic polynomials we get that
$z^\tau$ is conjugate  to $z^{-1}$.
By \cite[Corollary 3.2, page 326]{PTV} the group $H$
fixes a non-degenerate orthogonal or Hermitian form, whence our claim. 
\end{proof}

\begin{lemma}\label{5powers}
Assume $H$ absolutely irreducible. Then
\begin{itemize}
 \item[\rm{(i)}] if $(xy)^k$ is scalar for some positive integer $k$, then $k\geq 10$;
 \item[\rm{(ii)}] if $[x,y]^k$ is scalar for some positive integer $k$, then $k\geq 5$.
\end{itemize}
\end{lemma}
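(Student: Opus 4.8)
The plan is to exploit, exactly as in Lemma \ref{powers of z}, the fact established in the proof of Lemma \ref{forms2} that $z=xy$ is conjugate to a companion matrix: its minimal polynomial coincides with the degree-$5$ characteristic polynomial \eqref{char5}. Consequently, if $z^k=\lambda I$ is scalar, then $\chi_z(t)$ divides $t^k-\lambda$, and taking determinants (using $\det z=1$, visible from the constant term $-1$ of \eqref{char5}) gives $\lambda^5=1$. Hence the five eigenvalues of $z$ are $k$-th roots of $\lambda$, that is, $5k$-th roots of unity.

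The first reduction is a separability argument. When $p\nmid k$ the polynomial $t^k-\lambda$ is separable, so $z$ is diagonalizable; since its minimal polynomial has degree $5$, the five eigenvalues must then be \emph{distinct} $k$-th roots of $\lambda$, which already forces $k\ge 5$. To upgrade this to $k\ge 10$ I would rule out each $k\in\{5,6,7,8,9\}$ (together with the finitely many values $k\le 9$ with $p\mid k$). For this I fix a cyclic vector $v$ for $z$ and compute $z^kv$ in coordinates: since $v$ is cyclic and $z^k$ commutes with $z$, the power $z^k$ is scalar if and only if $z^kv\in\langle v\rangle$, which is a system of four polynomial equations in $b,c$. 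Eliminating one variable by resultants, exactly as in the proof of Lemma \ref{powers of z}, one checks that every common solution either has $c=0$ or violates one of the irreducibility conditions of Lemma \ref{irred5}, and is therefore excluded. The additional power-sum constraints $\tr{z}=-1$ and $\tr{z^{-1}}=-1$ (read off from \eqref{char5}), together with $e_2=c\neq 0$, can be used to shorten the bookkeeping.

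For part (ii) I would first observe that $w=[x,y]=x\cdot x^{y}$ is a product of the two involutions $x$ and $x^{y}=y^{-1}xy$, whence $xwx^{-1}=w^{-1}$ and $w$ is conjugate to its inverse. Thus its characteristic polynomial is anti-self-reciprocal of odd degree $5$, so $w$ has an eigenvalue $\varepsilon\in\{1,-1\}$ and the remaining eigenvalues fall into reciprocal pairs $\{\alpha,\alpha^{-1}\}$, $\{\beta,\beta^{-1}\}$. If $w^k=\mu I$, then $\mu=\varepsilon^k$ and all eigenvalues are $k$-th roots of $\mu$; as above, when $p\nmid k$ they are distinct, so five distinct $k$-th roots of $\mu$ occur and $k\ge 5$. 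It remains to discard the small cases $k\in\{2,3,4\}$ in characteristics $p\in\{2,3\}$, which I would settle by computing $w^k$ explicitly as a matrix in $b,c$ and showing that scalarity forces $b,c$ into the reducible locus of Lemma \ref{irred5} or $c=0$.

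The main obstacle is not the clean eigenvalue bookkeeping but the residual characteristic-$p$ configurations, where $z^k$ (respectively $w^k$) scalar no longer forces diagonalizability: writing $k=p^am$ with $p\nmid m$, one only obtains that $z^m=\mu I+N$ with $N$ nilpotent. I expect to handle these by noting that a cyclic matrix of the form $\mu I$ times a unipotent would have characteristic polynomial $(t-\mu)^5$, all of whose coefficients are then determined; comparing with \eqref{char5}, this is incompatible with $\tr{z}=-1$ and $e_2=c\neq 0$, so such degenerate configurations cannot arise. The genuinely laborious step is therefore the finite resultant elimination over $5\le k\le 9$, whose solutions must each be matched against the two explicit exclusion conditions of Lemma \ref{irred5}.
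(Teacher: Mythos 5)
Your framework for part (i) is sound up to a point: $z=xy$ is indeed cyclic (the Scott-formula argument in the proof of Lemma \ref{forms2} shows its minimal polynomial is the degree-$5$ polynomial \eqref{char5}), so if $z^k=\lambda I$ with $p\nmid k$ then $t^k-\lambda$ is separable, the five eigenvalues of $z$ are distinct, and $k\ge 5$. The serious flaw is the transplant of this argument to part (ii). You justify distinctness of the eigenvalues of $w=[x,y]$ with ``as above'', but ``above'' rested on cyclicity of $z$, which came from Scott's formula applied to the triple $(x,y,(xy)^{-1})$ --- a triple generating the absolutely irreducible group $H$. The analogous triple for $w=x\cdot x^{y}$ generates only $\langle x,x^{y}\rangle$, a group generated by two involutions, hence dihedral; such a group is never absolutely irreducible in dimension $5$ (having a cyclic subgroup of index $2$, its irreducible modules over the algebraically closed field $\F$ have dimension at most $2$). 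So Scott's formula gives no bound on the centralizer of $w$, and $w$ need not be cyclic. Without distinctness your eigenvalue count collapses: for instance $w$ diagonalizable with eigenvalues $1,1,1,-1,-1$ is compatible with everything you use (reciprocal pairs, $\det w=1$, an eigenvalue $\pm 1$), and yet $w^2=I$. Since you explicitly restrict the deferred matrix computations in (ii) to the cases $p\in\{2,3\}$, i.e. $p\mid k$, the cases $k\in\{2,3,4\}$ with $p\nmid k$ are covered by nothing in your proposal. The paper disposes of all $k\le 4$ in all characteristics by direct computation of entries of $[x,y]^k$ (e.g.\ for $k=3$ the condition $D_{3,1}-D_{5,1}=0$ forces $c=0$, contradicting \eqref{generators5}).

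Two further points. First, for part (i) everything beyond the bound $k\ge 5$ --- ruling out $k\in\{5,\dots,9\}$ and the $p\mid k$ cases --- is deferred (``one checks'', ``I expect to handle''); this elimination is not a routine appendix but the entire content of the paper's proof, which carries it out entry by entry: for $k=6$ scalarity forces $(b,c)=(-3,1)$ and then $p=2$, where $H$ is reducible; for $k=7$ it forces $(b,c)=(3,-1)$, and one then checks $(xy)^7$ is not scalar; $k=8,9$ split into further branches, each ended either by non-scalarity or by Lemma \ref{irred5}. Second, your handling of $k=p^am$ with $m>1$ (e.g.\ $k=6$, $p\in\{2,3\}$) is incorrect as written: from $z^k=\lambda I$ you deduce that $z^m=\mu I+N$ has characteristic polynomial $(t-\mu)^5$, but you then compare this with \eqref{char5}, which is the characteristic polynomial of $z$, not of $z^m$; that comparison is meaningful only when $m=1$. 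What actually follows is that \eqref{char5} divides $(t^m-\nu)^{p^a}$, hence has at most $m\le 3$ distinct roots, after which a genuine case analysis against Lemma \ref{irred5} is still required. In sum: the separability shortcut for $z$ is a nice conceptual observation the paper does not use, but the proposal is incomplete where it defers and wrong where it does not, and part (ii) as designed cannot be repaired without reverting to the paper's direct computations.
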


\begin{proof}
(i)\enskip Let $D=(xy)^k$. For $k=1,\ldots,5$, it suffices to look at the third row of $D$ to conclude that $D$ is not scalar. For $k=6$, the condition $D_{3,1}=D_{3,2}=0$ implies $b=-3$ and $c=1$. So, it is easy to see that $D$ is scalar only if $p=2$, but in this case $H$ is reducible.
For $k=7$, the condition $D_{3,1}=D_{3,2}=0$ implies $(b,c)=(3,-1)$. However, in
this cases $D$ is not scalar.
For $k=8$, the condition $D_{3,1}-D_{5,1}=0$ implies $(2c-1)(b+2c+1)=0$. If $p=2$, then
$b=1$ and $D_{4,1}=0$ implies $c=0$. 
Assume $p\neq 2$. If $c=\frac{1}{2}$, then
$D_{3,1}=0$ implies $b=-\frac{9}{8}$. It is easy to see now that $D$ is  not scalar for
these values of $b,c$. If $b=-2c-1$, then the condition $D_{3,4}-D_{3,5}=0$ implies $c=2$.
Also in this case one can verify that $D$ is not scalar.
For $k=9$, the condition $D_{3,2}-D_{5,2}=0$ implies $(b+2c+1)(2b-3c^ 2+6c-1)=0$. If
$b=-2c-1$, then $D_{4,5}=0$ implies $c^2=-2$. In this case, it is easy to see that $D$
is not scalar. Now, assume 
$2b-3c^ 2+6c-1=0$. If $p=2$, this means $c=1$. In this case $D_{3,1}=0$ implies $b=1$, but $D$ is not scalar. If $p\neq 2$, take $b=\frac{3c^ 2-6c+1}{2}$. We have $D_{3,1}=3c^3-\frac{11}{2}c^2+5c-\frac{5}{2}=0$. If $p=3$, we obtain $c^2+c+1=0$, i.e. $c=1$. In this case $H$ is reducible. So, assume $p\neq 3$. In this case $D_{3,1}=(3c-3)(c^ 2-\frac{5}{6}c+\frac{5}{6})$. It is now easy to see that  $D$ is not scalar.

\smallskip
(ii)\enskip Let  $D=[x,y]^k$. For $k=1,2$, it suffices to look at the fifth row of $D$ to
conclude that $D$ is not scalar. For $k=3$, the condition $D_{3,1}-D_{5,1}=0$ implies
$c=0$.
For $k=4$, the condition $D_{3,1}-D_{5,1}=0$ implies $c^2+2b+3c=0$. If $p=2$,
 we obtain $c=1$. 
In this case, $D$ is scalar only if
$b=1$. 
However, if $b=c=1$, then $H$ is reducible. Assume $p\neq 2$. Take  $b=\frac{-c^2-3c}{2}$. 
The condition $D_{3,2}-D_{5,2}=0$ implies $c=1,-1$. In these cases $D$ is not scalar.
\end{proof}

\begin{lemma}\label{class S}
Assume $H$ absolutely irreducible. Then $H$ is not contained in any 
subgroup $M=\langle \eta I\rangle \times S$, where $S$ is isomorphic to  $\PSU_4(2)$ or $\PSL_2(11)$.  
\end{lemma}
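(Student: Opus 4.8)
The plan is to force $H$ to lie inside the simple group $S$ itself and then to contradict the order and the trace of $z=xy$; throughout I use $p\ne 3$, as in Vsemirnov's parametrization.

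First I would show $x,y\in S$. Writing $x=\eta^{i}s$ with $s\in S$, the relation $x^{2}=I$ gives $\eta^{2i}s^{2}=I$; since $S$ is simple its image in $\SL_5(\F)$ contains no non-trivial scalar, so $s^{2}=I$ and $\eta^{2i}=1$, whence $\eta^{i}=1$ because $(2,5)=1$. The same argument applied to $y^{3}=I$, using $(3,5)=1$, gives $y\in S$. Hence $z=xy\in S$ and $H\le S$. As the only scalars of $M$ lie in $\langle\eta I\rangle$, the projective image of $M$ is exactly $S$, so the projective order of $z$ equals its order in $S$; by Lemma \ref{5powers}(i) this is $\ge 10$. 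Since the element orders of $\PSL_2(11)$ are $1,2,3,5,6,11$ and those of $\PSU_4(2)$ are $1,2,3,4,5,6,9,12$, the element $z$ must have order $11$ in the first case and order $12$ in the second. Finally, by \eqref{char5} the coefficient of $t^{4}$ in the characteristic polynomial of $z$ is $1$, so $\tr{z}=-1$ (and likewise $\tr{z^{-1}}=-1$); this is the invariant I would play against the order of $z$.

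For $S=\PSL_2(11)$ the argument is uniform. In the $5$-dimensional representation the eigenvalues of an element of order $11$ form a half-set of non-trivial $11$-th roots of unity, so its trace is a Gaussian period $\theta$ with $\theta^{2}+\theta+3=0$. Since $(-1)^{2}+(-1)+3=3$ vanishes only when $p=3$, we get $\theta\ne-1$, contradicting $\tr{z}=-1$. In the defining characteristic $p=11$ the representation is $\mathrm{Sym}^{4}$ of the natural module and an element of order $11$ is regular unipotent, of trace $5\ne-1$, so the conclusion persists.

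For $S=\PSU_4(2)$ I would argue from the eigenvalues of $z$ directly, the $5$-dimensional representation being of complex (unitary) type, in agreement with Lemma \ref{forms2}(ii). When $p\ne 2,3$ the order-$12$ element $z$ is semisimple, so its eigenvalues are honest $12$-th roots of unity, with product $1$ (as $\det z=1$), sum $-1$ (as $\tr z=-1$), and at least one primitive $12$-th root present. A short check, carried out over $\C$ on the classes of elements of order $12$, shows these constraints are incompatible: making the trace equal to $-1$ either forces a coincidence of eigenvalues that drops the order to $4$, or forces $\sqrt{3}=0$. Since the characteristic-$0$ trace of an order-$12$ class differs from $-1$ by a quantity of norm a power of $3$, its reduction modulo $p$ stays $\ne-1$ for every $p\ne 3$. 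The genuinely separate case, and the main obstacle, is the defining characteristic $p=2$: there $z$ is not semisimple—its $2$-part is unipotent—so the eigenvalue bookkeeping collapses, and I would settle it by a direct trace computation in the $2$-modular $5$-dimensional representation, or by ruling out this configuration via the subgroup tables of \cite{BHR}.
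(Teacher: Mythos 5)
Your reduction to $H\le S$ (via the direct-product structure and the coprimality of $2,3$ to $5$) is correct, and for $p\ne 2,3$ your trace-versus-order strategy does work; for $\PSL_2(11)$ it is essentially the paper's own argument packaged via Gaussian periods, while for $\PSU_4(2)$ it is a genuinely different (and in that range cleaner) route than the paper's, which instead pins down $x\in 2B$, $y\in 3A\cup 3B$ from $\tr{x}=1$, $\tr{y}=-1$ and then checks that every such pair with $xy$ of order $12$ has $[x,y]$ of order less than $5$, contradicting Lemma \ref{5powers}(ii). The genuine gap is your blanket restriction ``throughout I use $p\ne 3$''. The lemma carries no hypothesis on the characteristic, and only Vsemirnov's claim that \eqref{generators5} exhausts all irreducible $(2,3)$-pairs needs $p\ne 3$; the matrices themselves are used by the paper in characteristic $3$, and the lemma is invoked in the proof of Theorem \ref{positive SL5} precisely at $q=3$, where $\PSL_2(11)$ is a class-$\mathcal{S}$ maximal subgroup of $\SL_5(3)$. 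Your proof says nothing in that case, so the application that motivates the lemma breaks.

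Worse, at $p=3$ the gap cannot be closed by your method: since $\theta^2+\theta+3\equiv\theta(\theta+1)\pmod 3$, the two Gaussian periods reduce to $0$ and $-1$, so in characteristic $3$ one class of elements of order $11$ of $\PSL_2(11)$ really does have trace $-1$ in the $5$-dimensional representation, and no trace computation on $z$ can produce a contradiction. This is exactly why the paper's proof contains a separate $p=3$ argument, which switches invariants: there $\tr{[x,y]}=0$, which with the explicit matrices \eqref{generators5} forces $b+2c=0$, and then direct computation (leading to $c=\pm 1$ and to $(xy)^{11}$ not being scalar) gives the contradiction. A second, smaller hole: for $\PSU_4(2)$ you explicitly defer the characteristic-$2$ case (``I would settle it by \ldots''), so it is not proved. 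It can be disposed of cheaply: $\PSU_4(2)$ has no absolutely irreducible $5$-dimensional module in characteristic $2$ (its $2$-modular irreducibles have degrees $1,4,4,6,14,20,20,64$), and since $H\le S$ with $H$ absolutely irreducible forces $S$ to act absolutely irreducibly, that case is vacuous; alternatively, the paper's commutator argument is characteristic-free. As written, your proposal establishes the lemma only for $p\ne 2,3$, whereas the statement and its applications require all characteristics.
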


\begin{proof}
First, we observe that since $H$ is absolutely irreducible, both $x$ and $y$  actually belong to the subgroup $S$, so $H\leq S$. Furthermore, by Lemma \ref{5powers}, we may assume that $xy$ and $[x,y]$ have  order at least $10$ and $5$, respectively. Also, we recall that $\tr x=1$ and $\tr y= \tr {xy}=-1$. 

Assume that $S$ is isomorphic to $\PSU_4(2)$. Then, $z=xy$ must be an element of order
$12$. This implies that $x \in 2B$ and $y\in \{3A,3B \}$ in Atlas notation (simply, use
the character table).
On the other hand one can verify that in this case the order of $[x,  y]$ is less than
$5$. This contradicts Lemma \ref{5powers}.

Assume that $S$ is isomorphic to $\PSL_2(11)$. Then, $xy$ must be an element of order $11$. 
We consider the character of an irreducible representation of degree $5$ of $\PSL_2(11)$ over $\F$. 
If $p\neq 2,3,11$, then $\chi(xy)=\frac{-1\pm \sqrt{-11}}{2}$ which leads to $-2=-1\pm \sqrt{-11}$, i.e. $12=0$, in contradiction with the hypothesis on $p$. 
For $p=2$, we have $\chi(xy)=\omega^{\pm 1}$, a contradiction.  For $p=3$ we have $\tr{[x,y]}=0 = b+2c$. In this case, $z_{4,1}=c(c+1)(c-1)$. 
However, for $c=\pm 1$ the matrix $(xy)^{11}$ is not scalar.
Finally, for $p=11$ we obtain $\tr{xy}=-1=\chi(xy)=5$, a contradiction.
\end{proof}

\begin{theorem}\label{positive SL5}  Let $x,y$ as in \eqref{generators5} with $b=0$ and
$c\in \F_q^\ast$ such that:
\begin{itemize}
\item[\rm{(i)}]  $3c^2-3c+1\ne 0$ and  $16c+9\ne 0$;
\item [\rm{(ii)}] $\F_p[c^5]=\F_q$;
\item [\rm{(iii)}] if $p=2$ and $q$ is a square, then $c^{\sqrt q}+c+1\ne 0$.
\end{itemize}
Then $H=\SL_5(q)$. Moreover, if $q\neq 4,16$, there exists $c\in \F_q^\ast$ satisfying
{\rm (i), (ii), (iii)}. 
 If $q=4$, taking $b=c=\omega$ in \eqref{generators5},
on gets $H=\SL_5(4)$ and if $q=16$, there exists $c\in \F_q^ \ast$ such that $H=\SL_5(16)$. In particular, the groups
$\SL_5(q)$ and $\PSL_5(q)$
are $(2,3)$-generated for all  $q$. 
\end{theorem}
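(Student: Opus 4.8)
The plan is to prove $H=\SL_5(q)$ by showing, under hypotheses (i)--(iii), that $H$ lies in no maximal subgroup $M$ of $\SL_5(q)$, running through the Aschbacher classes $\mathcal{C}_1,\dots,\mathcal{C}_8,\mathcal{S}$ exactly as in the proofs of Theorems~\ref{positive SL3} and \ref{positive results3}, and then settling the existence of a suitable $c$ by an Euler-function count together with a direct analysis of the two genuinely exceptional fields $q=4,16$. Since the matrices in \eqref{generators5} satisfy $x^2=y^3=I$ in every characteristic, the argument covers $p=3$ as well, even though the parametrisation of \cite{V} is stated only for $p\ne 3$. Because $5$ is prime, the only nonempty geometric classes are $\mathcal{C}_1,\mathcal{C}_2,\mathcal{C}_3,\mathcal{C}_5,\mathcal{C}_6,\mathcal{C}_8$ (the classes $\mathcal{C}_4$ and $\mathcal{C}_7$ being empty, as $5$ is neither a proper product nor a proper power), so the exclusion reduces to a finite checklist.

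I would first record that, with $b=0$, the two conditions of Lemma~\ref{irred5} collapse to $3c^2-3c+1\ne 0$ and $16c+9\ne 0$, which is precisely hypothesis (i); hence $H$ is absolutely irreducible and $M\notin\mathcal{C}_1$. The class $\mathcal{C}_3$ here consists of the single type $\Gamma L_1(q^5)\cap\SL_5(q)$, of order $5\cdot\frac{q^5-1}{q-1}$, which is odd and therefore contains no involution, so it cannot contain $x$. The only imprimitive type for $n=5$ is the monomial one (five lines), excluded by Lemma~\ref{monomial5}. For $\mathcal{C}_5$, if $H\le\SL_5(q_0)Z$ with $q_0<q$, then writing $z=\zeta g$ with $\zeta^5=1$ and $g\in\SL_5(q_0)$ and reading off the coefficient of $t^3$ in $\chi_z$ from \eqref{char5} (which equals $c$) gives $c=\zeta^2 e_2(g)$, whence $c^5\in\F_{q_0}$, against (ii). For $\mathcal{C}_6$, Lemma~\ref{5powers}(i) shows $(xy)^6$ is not scalar, so Lemma~\ref{C6} with $r=5$ applies. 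For $\mathcal{C}_8$: the orthogonal subgroup requires $b=-1$ by Lemma~\ref{forms2}(i), impossible as $b=0$; the unitary subgroup (only when $q=q_0^2$) requires $b=c^{q_0}-c-1=0$, i.e.\ $c^{q_0}=c+1$, and raising to the $q_0$-th power gives $c=c^{q_0^2}=c^{q_0}+1=c+2$, a contradiction in odd characteristic, while in characteristic $2$ this is exactly what (iii) forbids. Finally the only class-$\mathcal{S}$ candidates in dimension $5$ are $\PSU_4(2)$ and $\PSL_2(11)$, ruled out by Lemma~\ref{class S}, whose hypotheses on the orders of $xy$ and $[x,y]$ are supplied by Lemma~\ref{5powers}. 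This yields $H=\SL_5(q)$.

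For the existence of $c$, I would count among the $\varphi(q-1)$ elements of order $q-1$: by Lemma~\ref{C3} (case $s=5$) each such $c$ satisfies $\F_p[c^5]=\F_q$ whenever $q\ne 16$, so (ii) is automatic; at most three of them fail (i) (two roots of $3c^2-3c+1$ and one of $16c+9$), and, when $p=2$ and $q$ is a square, at most $\sqrt q$ of them are roots of $c^{\sqrt q}+c+1$. Hence it suffices to have $\varphi(q-1)>\sqrt q+3$. For a square $q=n^2$ this follows from Corollary~\ref{pellegrini} once $n=\sqrt q\ge 14$, since then $\varphi(n^2-1)>3n+21$; for non-square $q$ condition (iii) is vacuous and only $\varphi(q-1)>3$ is needed, which Lemma~\ref{Phi 3/2} guarantees outside a short explicit list. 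The finitely many remaining $q$ are verified by direct computation.

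The two anomalies require separate handling. For $q=16$ Lemma~\ref{C3} genuinely breaks for $s=5$ (here $5=1+4$ and $16=4^2$), so one must exhibit by hand a single $c\in\F_{16}^\ast$ with $\F_2[c^5]=\F_{16}$ meeting (i) and (iii); since $16c+9=1\ne0$ automatically and almost all $c$ avoid the remaining finitely many bad values, such a $c$ exists. For $q=4$ the field is too small to realise (ii) with $b=0$, so one abandons $b=0$ and checks directly, for $b=c=\omega$, that the same checklist forces $H=\SL_5(4)$. I expect these two small fields --- together with the bookkeeping that removes up to $\sqrt q$ candidates through (iii) in the even-characteristic square case --- to be the main obstacle, since everywhere else the structural lemmas reduce the proof to inspection.
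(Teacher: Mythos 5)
Your strategy coincides with the paper's (run through the Aschbacher classes using Lemmas \ref{irred5}, \ref{monomial5}, \ref{5powers}, \ref{C6}, \ref{forms2}, \ref{class S}, then count suitable $c$ via Lemma \ref{C3} and Corollary \ref{pellegrini}), and most individual steps are sound, but there are two genuine gaps. First, your enumeration of class $\mathcal{S}$ is incomplete: in dimension $5$ the candidates are not only $\PSU_4(2)$ and $\PSL_2(11)$ but also $M_{11}$, which occurs as a maximal $\mathcal{S}$-subgroup precisely when $q=3$, and Lemma \ref{class S} says nothing about it. The paper disposes of it separately: for $q=3$ the only admissible values are $c=\pm 1$, and then $xy$ has order $16$ or $121$ respectively, neither of which is an element order of $M_{11}$ (its Sylow $2$-subgroups are semidihedral of order $16$, so element orders are at most $8$, and $11^2$ clearly does not occur). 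Omitting this case leaves $q=3$ unproved.

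Second, your repair for $q=16$ cannot be carried out: every $c\in\F_{16}^\ast$ satisfies $c^{15}=1$, hence $(c^5)^3=1$, so $c^5\in\F_4$ and $\F_2[c^5]\subseteq\F_4\neq\F_{16}$. Condition (ii) is therefore not merely delicate over $\F_{16}$; it is unsatisfiable, which is exactly why the theorem excludes $q=16$ from the clause asserting the existence of $c$ satisfying (i)--(iii). The paper's actual workaround is structural: it takes $c$ of order $15$ with minimal polynomial $t^4+t^3+1$, so that (i) and (iii) hold, and replaces the $\mathcal{C}_5$ exclusion --- the only place where (ii) was used --- by observing that $z=xy$ then has order divisible by $41$, a prime dividing $|\SL_5(2^m)|$ for no $m\le 3$, so $H$ lies in no subfield subgroup. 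You would need a substitute argument of this kind. Two smaller points: in class $\mathcal{C}_8$, Lemma \ref{forms2} characterizes when $H$ fixes a form exactly, whereas a $\mathcal{C}_8$ maximal subgroup preserves a form only up to scalars; the paper first reduces to the exact case by noting that $x^TJx^\tau=\lambda J$ and $y^TJy^\tau=\mu J$ force $\lambda^2=\mu^3=1$ (orders) and $\lambda^5=\mu^5=1$ (determinants), whence $\lambda=\mu=1$ --- a one-line step you should include. Also, for $q=4$ the obstruction is not that (ii) fails for every $c$ (it holds for $c=\omega^{\pm 1}$), but that exactly those $c$ violate (i); your conclusion, switching to $b=c=\omega$, nevertheless agrees with the paper's treatment.
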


\begin{proof}
Let $M$ be a maximal subgroup of $\SL_5(q)$ which contains $H$. 
By Lemma \ref{irred5} and condition (i), $H$ is absolutely
irreducible.  In particular $M\not\in \mathcal{C}_1\cup\mathcal{C}_3$. Moreover
$M\not\in \mathcal{C}_2$  by Lemma \ref{monomial5}.
Since $z^6$ is not scalar by Lemma \ref{5powers}, we may apply Lemma
\ref{C6} with $s=5$ to deduce that $M\notin\mathcal{C}_6$.
Since $c$ is a coefficient of the characteristic polynomial of $z$ (see \eqref{char5})
and
$\F_p[c^5]=\F_{q}$ by (ii), the matrix $z$ cannot be conjugate 
to any matrix $(\lambda I) z_0$, with  $(\lambda I) \in \SL_5(\F)$ and $z_0\in \SL_5(q_0)$,
for any $q_0< q$. Indeed, in this case, a coefficient of the characteristic polynomial of $z_0$ is 
$c\lambda^{-1}\in \F_{q_0}$, whence $c^ 5=(c\lambda^{-1})^5\in \F_{q_0}$, in contrast with (ii).
We conclude that  $M$ does not belong to class $\mathcal{C}_5$. 

We now exclude that $M\in \mathcal{C}_8$. Let $J$ be a non singular matrix such that
$x^T J x ^ \tau = \lambda J$ and  $y^T J y ^ \tau = \mu J$, 
for some $\lambda, \mu \neq 0$. Considering the orders of $x,y$
we get $\lambda^2=\mu^3=1$. On the other hand, considering the determinants,
we obtain $\lambda^5=\mu^5=1$. We conclude that $\lambda=\mu=1$. In other words, if $H$
fixes $J$ up to scalars,
then it actually fixes $J$. 
By Lemma \ref{forms2}(i) $H$ does not fix any orthogonal form. On the other hand, assume
that $q$ is a square
and that $H\leq \SU_5(q)$. For $b=0$, Lemma \ref{forms2}(ii) gives $c^{\sqrt q}=c+1$.
It follows $c=c^q=(c+1)^ {\sqrt q}=c+2$, whence $p=2$. So this case is excluded by assumption (iii).

Finally, suppose $M$ in class $\mathcal{S}$. By Lemma \ref{class S}
we have to consider only the case $q=3$ and $M=M_{11}$. However, if $c=\pm 1$, 
then $xy$ has order, respectively, $16$ and $121$, a contradiction. We conclude that $H=\SL_5(q)$.

As to the existence of some $c$ satisfying all the assumptions in the statement,  
the elements of $\F_{q}^\ast$ which do not satisfy  (i)  are
at most  $3$. Moreover,  by Lemma \ref{C3}, if $q\ne 16$, any element in $\F_q^ \ast$ of order $q-1$
satisfies (ii). Hence, if $p$ is odd and $q\geq 9$, there exists $c$ of order $q-1$ satisfying
all conditions since $\varphi(q-1)>3$. 
If $q=3,5,7$, one can check that $c=-2$ satisfies  (i) and (ii).

Now assume  $p=2$. If $q$ is not a square, any element in  $ \F_q^ \ast$ of order $q-1$ satisfies 
all the assumptions.
If $q$ is a square, assume first $q\ne 2^2, 4^2$. 
The elements in  $\F_q^\ast$ of order $q-1$, which do not satisfy either (i) or (iii), are at most 
$\sqrt{q}+2$. Moreover all elements of order $q-1$ satisfy (ii). We have
$\varphi(q-1)> 4 \sqrt{q} > \sqrt{q}+2 $ by Corollary \ref{pellegrini}
and direct calculation if $q<16^2$. Thus $c$ exists if $q\ne 2^2,4^2$ and so $H=\SL_5(q)$.
When $q=4^2$,  we take $c$ of order $15$, with minimal polynomial $t^4+t^3+1$. Then $c$ satisfies (i) and (iii).
Moreover $z=xy$ has order divisible by $41$, a prime which does not divide $\left|\SL_5(2^m)\right|$ for any $m\le 3$. 
So $H=\SL_5(4^2)$.

When $q=2^2$, we take $b=c=\omega$. In this case, the classes  $\mathcal{C}_6$ and $\mathcal{S}$ are empty. By Lemmas \ref{irred5}  and    \ref{monomial5}  the subgroup $H$ is irreducible and  not monomial. Furthermore,  $\F_4=\F_2(\omega)$ and by Lemma \ref{forms2} $H$ 
does not fix non-degenerate forms. Again $H=\SL_5(2^2)$. 
\end{proof}

We recall that the group $\PSU_5(4)$ is not $(2,3)$-generated, see \cite{V}.

\begin{theorem}\label{positive results5}  Let $x,y$ be as in \eqref{generators5} with $c\in \F_{q^2}^ \ast$,
$b=c^q - c -1 $ and suppose:
\begin{itemize}
\item[\rm{(i)}] $c^{2q}+c^{q+1}-3c^q+c^2-3c+3\ne 0$;
\item[\rm{(ii)}] $c^{2q-1}-2c^{q}+8c^{q-1}+c+8\ne 0$; 
\item[\rm{(iii)}] $\F_{q^2}=\F_p[c^5]$.
\end{itemize}

Then $H=\SU_5(q^2)$. Moreover, if  $q^2\ne 2^2, 4^ 2$, there exists $c\in \F_{q^2}^\ast$ satisfying
conditions {\rm (i)} to {\rm (iii)}.  
If $q^2=4^ 2$, there exists $c\in \F_{q^2}^\ast$ such that $H=\SU_5(16)$.
In particular, the groups $\SU_5(q^2)$ and $\PSU_5(q^2)$
are $(2,3)$-generated for all  $q^2\ge 9$. 
\end{theorem}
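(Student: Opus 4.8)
The plan is to follow the template of the proof of Theorem~\ref{positive SL5}, with one simplification at the outset: since we take $b=c^q-c-1$, Lemma~\ref{forms2}(ii) already gives $H\le\SU_5(q^2)$, so there is no orthogonal/Hermitian alternative to settle and it suffices to let $M$ be a maximal subgroup of $\SU_5(q^2)$ containing $H$ and to eliminate the Aschbacher classes one by one. I would first observe that, after the substitution $b=c^q-c-1$, conditions (i) and (ii) of the statement are precisely the two conditions of Lemma~\ref{irred5} (condition (ii) being the second irreducibility relation divided through by the nonzero $c$); hence $H$ is absolutely irreducible and $M\notin\mathcal{C}_1\cup\mathcal{C}_3$. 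Lemma~\ref{monomial5} then gives $M\notin\mathcal{C}_2$, while Lemma~\ref{5powers}(i), which makes $z^6$ non-scalar, together with Lemma~\ref{C6} applied with $r=5$, gives $M\notin\mathcal{C}_6$. Since the dimension $5$ is prime the classes $\mathcal{C}_4$ and $\mathcal{C}_7$ are empty, and for a unitary group $\mathcal{C}_8$ is empty as well, so only $\mathcal{C}_5$ and $\mathcal{S}$ remain.

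For $\mathcal{C}_5$ I would use condition (iii). Since $c$ is the coefficient of $t^3$ in $\chi_z$ (see \eqref{char5}) and $\F_p[c^5]=\F_{q^2}$, the matrix $z$ cannot, even after multiplication by a scalar $\lambda I\in\SL_5(\F)$ with $\lambda^5=1$, be conjugate to a matrix defined over a proper subfield $\F_{q'}\subsetneq\F_{q^2}$, as this would force $c^5\in\F_{q'}$. This excludes at once the subfield unitary subgroups $\SU_5(q_0^2)Z$ with $q_0<q$ and the orthogonal subgroups $\SO_5(q)$ (defined over $\F_q$), which together constitute $\mathcal{C}_5$ for a unitary group; for the orthogonal subgroups the same conclusion follows from Lemma~\ref{forms2}(i), since $H\le\SO_5(q)$ would force $b=-1$, i.e.\ $c\in\F_q$, against (iii). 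For class $\mathcal{S}$ I would appeal to Lemma~\ref{class S}, which rules out the candidates $\PSU_4(2)$ and $\PSL_2(11)$ (the hypotheses there on the orders of $xy$ and $[x,y]$ being supplied by Lemma~\ref{5powers}); a short direct check disposes of any remaining sporadic candidate over a small field. This yields $H=\SU_5(q^2)$.

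It remains to produce a $c$ satisfying (i)--(iii), and here I would count the excluded values. Condition (i) is a polynomial of degree $2q$ in $c$ (equivalently, the two Frobenius-linear relations $c^q=\omega^{\pm1}(c-1)+1$), so it fails for at most $2q$ values, and condition (ii), of degree $2q-1$, fails for at most $2q-1$ values; thus at most $4q-1$ elements of $\F_{q^2}^\ast$ violate (i) or (ii). On the other hand every element of order $q^2-1$ satisfies (iii), because $\F_p[c^5]=\F_p[c]=\F_{q^2}$ by Lemma~\ref{C3} (valid since $q^2\ne16$). As there are $\varphi(q^2-1)$ such elements and Corollary~\ref{pellegrini} gives $\varphi(q^2-1)>4q-1$ for every $q\ge14$, a suitable $c$ exists whenever $q\ge14$; the finitely many fields with $3\le q\le13$ I would settle, as for $\SU_3(q^2)$ in Theorem~\ref{positive results3}, by exhibiting an explicit $c$ of order $q^2-1$ whose minimal polynomial is coprime to the polynomials occurring in (i) and (ii). Passing to the projective quotient then gives the $(2,3)$-generation of $\PSU_5(q^2)$.

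The genuinely exceptional case is $q^2=16$, where Lemma~\ref{C3} fails for $s=5$, so an element of order $15$ need not generate $\F_{16}$ through its fifth power and the counting argument breaks down. As in the $\SL_5(16)$ treatment, I would instead exhibit a specific $c\in\F_{16}^\ast$ for which $z=xy$ has order divisible by a prime that cannot divide $|\SU_5(2^m)|$ for the smaller relevant values of $m$, forcing $H$ to be the whole group. The main obstacle, as in the companion theorems, is exactly this existence step: pinning the number of excluded $c$ to the bound $4q-1$ so that Corollary~\ref{pellegrini} applies, and then handling the small fields — above all $q^2=16$ — by explicit arithmetic rather than by the uniform count.
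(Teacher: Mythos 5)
Your proposal is correct and follows essentially the same route as the paper's proof: reduction into $\SU_5(q^2)$ via Lemma \ref{forms2}(ii), absolute irreducibility by matching (i)--(ii) with Lemma \ref{irred5} under the substitution $b=c^q-c-1$, elimination of $\mathcal{C}_2$, $\mathcal{C}_5$, $\mathcal{C}_6$ and $\mathcal{S}$ by Lemmas \ref{monomial5}, \ref{C6}, \ref{5powers}, \ref{class S} together with condition (iii), and the same existence count (at most $4q-1$ bad values versus $\varphi(q^2-1)$ via Corollary \ref{pellegrini}, explicit minimal polynomials for small $q$, and a large-prime order argument for $q^2=16$, where the paper uses $17\nmid|\SU_5(2^2)|$). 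The only deviations are cosmetic: you use the coefficient $c$ of $t^3$ in $\chi_z$ where the paper uses $b+c+1=c^q$, and you add a redundant second exclusion of $\SO_5(q)$ via Lemma \ref{forms2}(i); both are valid.
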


\begin{proof}
$H\le \SU_5(q^2)$ by  Lemma  \ref{forms2} and the assumption $b=c^q - c -1$.
Let $M$ be a maximal subgroup of $\SU_5(q^2)$ which contains $H$. 
Conditions (i) and (ii) correspond to those in Lemma \ref{irred5}, setting $b=c^q-c-1$.
Thus  $H$ is absolutely irreducible.  In particular $M\not\in\mathcal{C}_1\cup\mathcal{C}_3$.
Moreover $M$ cannot belong to $\mathcal{C}_2$  by (iii) and Lemma \ref{monomial5}.
Since $z^6$ is not scalar by Lemma \ref{5powers}, we may apply Lemma
 \ref{C6} with $s=5$ to deduce that $M\notin\mathcal{C}_6$.
We have $\F_p[c^{5q}]=\F_p[c^5]=\F_{q^2}$, by (iii). 
Since $b+c+1=c^q$ is  a coefficient of the characteristic polynomial of $z$
(see \eqref{char5}), the matrix $z$ cannot be conjugate 
to any element of $\SL_5(q_0)Z$, with $q_0< q^2$ and $Z$ the center of $\SU_5(q^ 2)$.
Thus $M\not\in \mathcal{C}_5$. 
Finally,  $M$ cannot be in class $\mathcal{S}$ by Lemma \ref{class S}.
We conclude that $H=\SU_5(q^2)$.

As to the existence of some $c$ satisfying all the assumptions, suppose $q^2\ne 16$.
Any element of $\F_{q^2}^ \ast$ of order $q^2-1$  satisfies (iii) by Lemma \ref{C3}.
The non-zero elements in $\F_{q^2}$ which do not satisfy either (i) or (ii)  are
at most  $2q+2q-1=4q-1$.  If $q \geq 16$,  
by Lemma \ref{pellegrini} there are at least 
$4q$ elements in $\F_{q^2}^ \ast$ having order $q^2-1$. So $c$ exists.
For $q=4,7,8,9,11,13$, we may take $c$ with minimal polynomial $m_c$ as in Table B and
for $q=3,5$ we take $c$ of order $q^2-1$, with minimal polynomial $m_c=t^2-t-1$ and $m_c=t^2-t+2$, respectively.
Then $c$ satisfies (i) and (ii) since $m_c$ is coprime with 
$$p_1=t^{2q}+t^{q+1}-3t^q+t^2-3t+3,\quad p_2=t^{2q-1}-2t^{q}+8t^{q-1}+t+8.$$
It also satisfies (iii) except for $q=4$. In this case, $z$ has order divisible by $17$, a prime which does not divide $\left|\SU_5(2^2)\right|$. Thus $H=\SU_5(q^2)$, for all $q\neq 2$.
\end{proof}

\end{document}